\newcommand{\R}{\mathbb{R}}
\newcommand{\C}{\mathbb{C}}
\newcommand{\Z}{\mathbb{Z}}
\newcommand{\N}{\mathbb{N}}
\newcommand{\cc}[1]{\overline{#1}}
\newcommand{\bs}{\boldsymbol}
\newcommand{\Tr}{\mathop{\mathrm{Tr}}}
\newcommand{\te}{\tilde}
\newcommand{\bk}{{\bs k}}
\newcommand{\bss}{{\bs s}}
\newcommand{\bxi}{{\bs\xi}}
\newcommand{\bx}{{\bs x}}
\newcommand{\bn}{{\bs n}}
\newcommand{\set}[2]{\left\{#1:#2\right\}}
\newcommand{\cA}{{\cal A}}
\newcommand{\cB}{{\cal B}}
\newcommand{\cG}{{\cal G}}
\newcommand{\cS}{{\cal S}}
\newcommand{\cD}{{\cal D}}
\newcommand{\cF}{{\cal F}}
\newcommand{\cV}{{\cal V}}
\newcommand{\cW}{{\cal W}}
\newcommand{\cL}{{\cal L}}
\newtheorem{theorem}{Theorem}[section]
\newtheorem{lemma}[theorem]{Lemma}
\newtheorem{corollary}[theorem]{Corollary}
\newtheorem{proposition}[theorem]{Proposition}
\newtheorem{remark}{Remark}[section]
\newtheorem{definition}{Definition}[section]
\numberwithin{equation}{section}
\begin{document}

\title{ELLIPTIC SCALING FUNCTIONS AS COMPACTLY SUPPORTED MULTIVARIATE ANALOGS OF THE B-SPLINES%
\thanks{To appear in IJWMIP}}

\author{Victor G.~Zakharov\\[0.5ex]
\small Institute of Continuum Mechanics of Russian Academy of Sciences,\\
\small              Perm, 614013, Russia\\
\small  E-mail: \tt    victor@icmm.ru
}

\maketitle{}

\begin{abstract}
In the paper, we present a family of multivariate compactly supported
scaling functions, which we
call as elliptic scaling functions. The elliptic scaling functions are the
convolution of elliptic splines, which correspond to homogeneous elliptic
differential operators, with distributions. The elliptic scaling functions
satisfy refinement relations with real isotropic dilation matrices.
The elliptic scaling functions satisfy most of the properties of the
univariate cardinal B-splines:
compact support, refinement relation, partition of unity,
total positivity, order of
approximation, convolution relation, Riesz basis formation (under
a restriction on the mask), etc.
The algebraic polynomials contained in
the span of integer shifts of any elliptic scaling function belong to
the null-space of a homogeneous elliptic differential operator.
Similarly to
the properties of the B-splines under differentiation, it is
possible to define elliptic (not necessarily differential) operators such
that the elliptic scaling functions satisfy relations with these operators.
In particular, the elliptic scaling functions can be considered as a
composition of segments, where the function inside a segment, like
a polynomial in the case of the B-splines, vanishes under the
action of the introduced operator.
\end{abstract}

\medskip

\noindent{\it Keywords:}\ Elliptic scaling functions; isotropic dilation matrices;
compact support; polyharmonic splines; cardinal
B-splines; homogeneous elliptic differential operators\\[1ex]
\noindent{\it 2010 MSC:} 42C40, 41A63, 41A15


\section{Introduction}

The $d$-variate polyharmonic splines
(thin plate splines, etc.),
see for example Refs.~\cite{Duchon,HarderDesmarais,MadychNelson1,MadychNelson2},
can be presented as some linear combinations of shifted versions of the Green functions
of the polyharmonic operators
$\Delta^m$, $m=1,2,\dots$, ($\Delta:=\sum_{j=1}^d\frac{\partial^2}{\partial x_j^2}$ is the Laplace operator).
The Green function of the operator $\Delta^m$
is of the form
\begin{equation*}
  \rho(\bx) := \left\{
    \begin{aligned}
      &|\bx|^{2m-d}\log|\bx| & & \mbox{if $d$ is even},\\
      &|\bx|^{2m-d} & & \mbox{if $d$ is odd,}\qquad \bx\in\R^d;\\
    \end{aligned}\right.
\end{equation*}
and the Fourier transform of the Green function is
\begin{equation*}
  \hat\rho(\bxi) := \dfrac{1}{|\bxi|^{2m}},\qquad \bxi
         \in\R^d.
\end{equation*}
Here and in the sequel,
we denote vectors by boldface symbols and shall
not distinguish vectors as points of the Euclidean spaces and as column-matrices.

A particular case of the polyharmonic splines,
the so-called elementary polyharmonic cardinal B-splines (the term of Ch.~Rabut~\cite{Rabut},
see also Refs.~\cite{MicchelliRabutUtreras,Rabut1,VilleBluUnser})
are most similar to the univariate cardinal B-splines.
In the Fourier domain, the elementary polyharmonic cardinal B-spline is of the form
\begin{equation}\label{PolyharmonicBSpline}
  \hat B_m(\bxi) := \dfrac{|2\sin(\bxi/2)|^{2m}}{|\bxi|^{2m}}
      = \left(\dfrac{4\sum_{n=1}^d\sin^2(\xi_n/2)}{\sum_{n=1}^d\xi_n^2}\right)^{m},
\end{equation}
where $m\ge d/2$, $\bxi:=(\xi_1,\dots,\xi_d)$,
$\sin\bxi:=(\sin\xi_1,\dots,\sin\xi_d)$.
In the present paper, we shall say that functions~\eqref{PolyharmonicBSpline} are called {\em polyharmonic B-splines}.
The polyharmonic B-splines
satisfy most of the properties of the univariate cardinal B-splines,
see Refs.~\cite{Rabut,VilleBluUnser}.
However an important property of the univariate B-splines being compactly supported is violated.
(The polyharmonic B-splines decay like $O\left(1/|\bx|^{d+2}\right)$ as $|\bx|\to\infty$~\cite{Rabut}).
Note also that {\em any} (multivariate) polyharmonic spline (defined in
the Cartesian coordinate system) is not compactly supported. See, for example, Ref.~\cite{LorentzOswald}.
Furthermore, the refinement relation, another important property of the
univariate cardinal B-splines, does not hold also, see
Refs.~\cite{Madych90,Madych92,MicchelliRabutUtreras,VilleBluUnser}.

In the paper, we present multivariate compactly-supported scaling functions.
For any real isotropic dilation matrix,
we can construct a trigonometric polynomial mask such that
the Fourier transform of the corresponding scaling function is of the form
\begin{equation}\label{ScalingFunction}
  \hat\phi^m(\bxi) := \dfrac{{G}(\bxi)}{(P(\bxi))^m}{M}(\bxi),\qquad m\ge1,\quad \bxi\in\R^d,
\end{equation}
where $P(\bxi)$ is a positive definite quadratic form (in particular, $|\bxi|^2$),
${G}(\bxi)$ is a trigonometric polynomial,
and the function ${M}(\bxi)$ is continuous on $\R^d$ and
does not decay at the infinity.
Thus scaling function~\eqref{ScalingFunction} can be considered as the convolution of
an elliptic spline, whose Fourier transform is $\dfrac{{G}(\bxi)}{(P(\bxi))^m}$,
with a distribution.
In the paper,
we shall say that the scaling functions of the form~\eqref{ScalingFunction} are called
{\em elliptic scaling functions}.

We state that any {\em real isotropic} (not necessarily dilation)
matrix $A$
is similar,
with a positive definite similarity transformation matrix, to an
orthogonal matrix; and the similarity transformation matrix
defines the positive definite quadratic form $P(\bxi)$.
Note that the quadratic form $P(\bxi)$ (and the corresponding homogeneous elliptic operator)
is invariant (up to the constant factor) under the coordinate transformation by the matrix $A$.
The invariance of the quadratic form $P(\bxi)$ is crucial for the properties of
scaling function~\eqref{ScalingFunction}.

In the paper, we show that elliptic scaling functions~\eqref{ScalingFunction} have many properties of the
univariate B-splines
and multivariate polyharmonic B-splines.
Any elliptic scaling function $\phi^m$ partitions unity,
is totally positive, its order of approximation is $2m$.
We present a sufficient condition that the integer shifts of $\phi^m$ form a Riesz basis.
Also note that the algebraic polynomials reproduced by the scaling function $\phi^m$ belong to the
null-space of the elliptic operator whose symbol is $(P(\xi))^m$.

Unlike the polyharmonic B-splines and like the univariate B-splines,
the elliptic scaling functions
are compactly supported and, by definition, satisfy refinement relations.

Considering the function $\widehat{\Delta^\sharp}(\bxi):=\dfrac{P(\bxi)}{\left({M}(\bxi)\right)^{1/m}}$,
see~\eqref{ScalingFunction},
as the symbol of an
operator, we see that the elliptic scaling function $\phi^m$,
similarly to the B-splines,
satisfies
relations with this operator. In particular, we have:
$\left(\Delta^\sharp\right)^m \phi^m(\bx) = 0$, $\forall \bx\in\R^d\setminus\Z^d$.
Thus the elliptic scaling functions
can be considered as a composition of segments,
where the function inside a segment, like a polynomial in the case of the B-splines,
vanishes under the action of the operator $\left(\Delta^\sharp\right)^m$.
Note also that the elliptic scaling functions $\phi^m$, in the Fourier domain, can be written
as $\hat\phi^m(\bxi)=\left(\dfrac{G(\bxi)}{\widehat{\Delta^\sharp}(\bxi)}\right)^m$,
where $G$ is a trigonometric polynomial
and $\widehat{\Delta^\sharp}$ is the symbol of an elliptic operator.
Hence we can say that the elliptic scaling functions can be considered as multivariate analogs of the
univariate cardinal B-splines.
On the other hand,
we shall see that the univariate cardinal B-splines come under the presented approach.
Namely the univariate cardinal B-splines of odd degree are a particular case of elliptic
scaling functions~\eqref{ScalingFunction}.

Note that, in the paper, we consider only the scaling
functions. The construction of the corresponding wavelets is not discussed.
Moreover, the orthogonalization or construction of dual
scaling functions (in the biorthogonal case)
are not discussed also. This
will be the object of another paper.

The paper is organized as follows. In Section~\ref{Section_Similarity},
we present some properties of the isotropic matrices. In particular, the similarity to orthogonal matrices,
the positive definite quadratic forms defined by the similarity transformation matrices,
and the invariance of the quadratic forms are
considered.
Section~\ref{Section_MaskChoice} is devoted to the construction of compactly supported
elliptic scaling functions of the first order.
The explicit form of the Fourier transform of the scaling functions is derived.
In this section, some properties of the first order scaling functions are presented.
Section~\ref{Section_ArbitraryOrderFunctions} is devoted to the definition
and properties of the elliptic scaling
functions of an arbitrary order.
In Section~\ref{Section_Examples},
several bivariate isotropic dilation matrices are presented and
some properties of the corresponding elliptic scaling functions
are discussed.

\subsection{Preliminaries and notations}

Let us introduce here some general notation.

For any vector $\bs v\in\R^d$, by $|\bs v|:=\sqrt{v_1^2+\cdots+v_d^2}$ denote vector's {\em length};
in other words, $|\bs v|$ is the usual Euclidean norm of $\bs v$.

A {\em multi-index} $k$ is a $d$-tuple
$(k_1,\dots,k_d)$ with its components being nonnegative
integers, i.\,e., $k\in\Z^d_{\ge0}$.
The {\em length} of the multi-index $k$ is $|k|:=
k_1+\cdots+k_d$.

By $\bx^k$ denote the monomial $x_1^{k_1}\cdots
x_d^{k_d}$, where $x:=(x_1,\dots,x_d)\in\R^d$,
$k:=(k_1,\dots,k_d)\in\Z^d_{\ge0}$. Note that the
{\em total degree} of $\bx^k$ is $|k|$.

Let $\Pi$ denote the space of {\em all} polynomials on $\R^d$.
Also by $\Pi_N$, $N\in\Z_{\ge0}$, denote the space of polynomials with
total degree less than or equal to $N$.

By $D^{k}$, $k\in\Z^d_{\ge0}$, denote the differential operator
$D_1^{k_1}\cdots D_d^{k_d}$, where $D_n$, $n=1,\dots,d$,
is the partial derivative with respect to the $n$th coordinate.

By $\bs e_j$ denote the $j$th basis vector of the space $\R^d$, i.\,e.,
$\bs e_j:=(\delta_{j1},\dots,\delta_{jd})$, where
$\delta_{jk}
 :=\left\{\begin{array}{ll}
    1, & \text{$j=k$,} \\
    0, & \text{$j\ne k$.}
  \end{array}\right.$

The Fourier transform of a function $f\in L^1(\R^d)$,
$d=1,2,\dots$, is defined as
$$
  {\cF}(f)(\bxi)=\hat f(\bxi):=\int_{\R^d} f(\bx)e^{-i\bxi\cdot \bx}\,d\bx,\qquad
     \bx,\bxi\in\R^d,
$$
where $\bx\cdot\bxi:=x_1\xi_1+\cdots+x_d \xi_d$.

By $\mathcal{S}'$ denote the {\em space of tempered distributions} and
by $\delta$ denote the {\em Dirac distribution}.

In the paper,
we shall suppose that the {\em dilation matrices} are
real integer matrices whose eigenvalues are greater
than 1 in absolute value.
Then, for any dilation matrix $A$, we have the following property
\begin{equation}\label{DilationMatrixProperty}
  \quad\lim_{j\to\infty}\left|A^{j}\bs x\right|\to\infty,\qquad \forall\bs x\in\R^d,\ \bs x\ne\bs 0.
\end{equation}

In the paper, we shall also suppose that the matrix-vector multiplication
is (left-)distributive over a (finite or countable) set of vectors of
an Euclidian space: $A\set{\bss_j}{j\in J}:=\set{A\bss_j}{j\in J}$,
where $A$ is a $d\times d$ matrix
and $\bss_j:=(s_{j,1},\dots,s_{j,d})\in\R^d$. Moreover, the distributive property can be extended to
uncountable sets.

To simplify the notations,
by $A^{-T}$ we shall denote the matrix
$(A^T)^{-1}\equiv(A^{-1})^T$.

A scaling function $\phi$ satisfies a {\em refinement relation}
\begin{equation}\label{ScalingRelation}
  \phi(\bx) = \sum_{\bk\in{\Z}^d}h_{\bk}|\det A|^\frac{1}{2}\phi(A\bx - \bk),
     \qquad\bx\in\R^d,
\end{equation}
where $A$ is a $d\times d$ dilation matrix.
Refinement relation~\eqref{ScalingRelation}
can be rewritten in the Fourier domain as
\begin{equation*}
  \hat\phi(\bxi) = m_0\left(A^{-T}\bxi\right)
     \hat\phi\left(A^{-T}\bxi\right),     \qquad \bxi\in\R^d,
\end{equation*}
where $m_0(\bxi)$, $\bxi\in\R^d$, is a $2\pi$-periodic function,
which is called {\em mask}. The Fourier transform of the scaling
function $\phi$ can be defined by the mask $m_0$ as follows
\begin{equation}\label{HatPhiInfiniteProduct}
  \hat\phi(\bxi)=\prod_{j=1}^\infty
    m_0\left(\left(A^{-T}\right)^{j}\bxi\right).
\end{equation}

\subsection{Auxiliary propositions on dilation matrices}

In this subsection, we present some well-known propositions and definitions concerning the dilation matrices.
\begin{proposition}\label{PropositionOnDigits}
Let $A$ be a non-singular $d\times d$ matrix with integer entries. Then the number
of the cosets of $\Z^d$ by modulo $A$ is equal to $|\det A|$ and
the set $\Z^d\cap A[0,1)^d$ is a set of representatives of the quotient $\Z^d/A\Z^d$.
\end{proposition}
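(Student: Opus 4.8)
The plan is to prove the two assertions together, extracting the index $|\det A|$ from an explicit transversal. Throughout I regard $\Z^d$ and $A\Z^d$ as subgroups of the additive group $\R^d$: since $A$ has integer entries, $A\Z^d\subseteq\Z^d$, and since $\det A\neq0$ the sublattice $A\Z^d$ has full rank, so $\Z^d/A\Z^d$ is a finite abelian group. First I would record that $A[0,1)^d$ is a fundamental domain for the translation action of $A\Z^d$ on $\R^d$. This is immediate from the corresponding statement for $[0,1)^d$ and $\Z^d$ — each $\bx\in\R^d$ is uniquely $\bn+\bs u$ with $\bn\in\Z^d$, $\bs u\in[0,1)^d$ — upon applying the linear bijection $A$, which gives the unique decomposition $\bx=A\bn+A\bs u$ with $A\bn\in A\Z^d$, $A\bs u\in A[0,1)^d$.

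Next I would show that the natural map $\Z^d\cap A[0,1)^d\hookrightarrow\Z^d\twoheadrightarrow\Z^d/A\Z^d$ is a bijection. For surjectivity, take $\bn\in\Z^d$ and write $\bn=A\bk+A\bs u$ with $\bk\in\Z^d$, $\bs u\in[0,1)^d$ as above; then $A\bs u=\bn-A\bk$ lies in $\Z^d$ (being a difference of elements of $\Z^d$) and in $A[0,1)^d$, so $A\bs u\in\Z^d\cap A[0,1)^d$ and it represents the coset of $\bn$. For injectivity, if $A\bs u,A\bs u'\in\Z^d\cap A[0,1)^d$ with $\bs u,\bs u'\in[0,1)^d$ are congruent modulo $A\Z^d$, then $A(\bs u-\bs u')\in A\Z^d$, hence $\bs u-\bs u'\in\Z^d$ since $A$ is invertible; but $\bs u-\bs u'\in(-1,1)^d$, which forces $\bs u=\bs u'$. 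This is precisely the statement that $\Z^d\cap A[0,1)^d$ is a complete set of representatives of $\Z^d/A\Z^d$, and it yields $\#(\Z^d/A\Z^d)=\#(\Z^d\cap A[0,1)^d)$.

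It remains to identify this cardinality with $|\det A|$. Here I would invoke the Smith normal form: there exist $U,V\in\mathrm{GL}_d(\Z)$ with $A=UDV$ and $D=\mathrm{diag}(d_1,\dots,d_d)$, each $d_i$ a positive integer. Since $U$ and $V$ are automorphisms of $\Z^d$, we get $A\Z^d=UD\Z^d$ and hence $\Z^d/A\Z^d\cong\Z^d/D\Z^d\cong\bigoplus_{i=1}^d\Z/d_i\Z$, a group of order $\prod_{i=1}^d d_i=\det D=|\det U|\,|\det D|\,|\det V|=|\det A|$. (Alternatively, $\#(\Z^d\cap A[0,1)^d)=|\det A|$ can be read off from a volume/equidistribution argument, comparing the number of $\Z^d$-points in a large cube with the number of translates $A[0,1)^d+A\bk$, $\bk\in\Z^d$, needed to fill it, the boundary contributing only a lower-order term.) Everything before this last step is elementary bookkeeping with fundamental domains; the identification of the index with $|\det A|$ is the only substantive ingredient, and it is entirely classical, so I would regard it as the crux.
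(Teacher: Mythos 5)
Your argument is correct and complete. Note that the paper itself does not prove this proposition at all --- it simply refers the reader to the book of Novikov, Protasov and Skopina --- so there is no in-paper proof to compare against; your write-up actually supplies what the paper omits. Both halves of your argument are sound: the fundamental-domain bookkeeping shows cleanly that the map $\Z^d\cap A[0,1)^d\to\Z^d/A\Z^d$ is a bijection (the injectivity step via $\bs u-\bs u'\in\Z^d\cap(-1,1)^d=\{\bs 0\}$ is exactly right), and the Smith normal form $A=UDV$ with $U,V\in\mathrm{GL}_d(\Z)$ identifies the index with $\prod_i d_i=|\det A|$; the only point worth making explicit is that the invariant factors $d_i$ are nonzero because $A$ is non-singular and may be taken positive by absorbing signs into $U$, which is standard. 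This is essentially the classical argument one finds in the cited literature, and either of your two routes for the final step (Smith form or the volume/equidistribution count) would be accepted.
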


\begin{definition}\label{Definition_SetOfDigits}
A full set of representatives of the
quotient $\Z^d/A\Z^d$ is called the {\em set of digits} of the matrix $A$; and we shall denote the set
of digits of the matrix $A$ by ${\cW}(A)$.
By ${\cS}(A)\subset[0,1)^d$ we denote the set of points such that $A{\cS}(A)={\cW}(A)$.
\end{definition}

\begin{remark}
By definition, the set of digits of the matrix $A$ does not depend on any specific choice
of the representatives of the quotient $\Z^d/A\Z^d$. Thus the sets $-{\cW}(A)$ and
$\bk+{\cW}(A)$, for some $\bk\in\Z^d$, contain different from the initial set ${\cW}(A)$ elements;
nevertheless they can be considered  (and denoted) as the set of digits ${\cW}(A)$.
On the other hand,
the elements of the set ${\cS}(A)$ are defined uniquely (up to a permutation).
\end{remark}

\begin{proposition}\label{LemmaDigitsProperties}
Let $A$ be a $d\times d$ nonsingular matrix with integer entries and ${\cW}(A)$
be its set of digits, then we have
\begin{equation*}
  \begin{aligned}
    &\bigcup_{\bn\in{\cW}(A)}\set{A\bk+\bn}{\bk\in\Z^d}=\Z^d;\\
    &\set{A\bk+\bn}{\bk\in\Z^d}\cap\set{A\bk+\bn'}{\bk\in\Z^d}=\emptyset,\quad \bn,\bn'\in{\cW}(A),\ \bn\ne \bn'.
  \end{aligned}
\end{equation*}
\end{proposition}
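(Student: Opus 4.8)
The plan is to prove both statements directly from Definition~\ref{Definition_SetOfDigits} together with Proposition~\ref{PropositionOnDigits}. Recall that ${\cW}(A)$ is, by definition, a full set of representatives of the quotient $\Z^d/A\Z^d$; concretely (by Proposition~\ref{PropositionOnDigits}) we may take ${\cW}(A)=\Z^d\cap A[0,1)^d$, but for the proof only the defining property matters: every $\bn\in\Z^d$ is congruent modulo $A\Z^d$ to exactly one element of ${\cW}(A)$, and distinct elements of ${\cW}(A)$ are incongruent modulo $A\Z^d$. I would phrase the two displayed identities as the ``covering'' and ``disjointness'' halves of this statement, respectively.

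For the covering identity, first observe the trivial inclusion $\bigcup_{\bn\in{\cW}(A)}\{A\bk+\bn:\bk\in\Z^d\}\subseteq\Z^d$, since $A$ has integer entries and each $\bn\in{\cW}(A)\subseteq\Z^d$. For the reverse inclusion, take an arbitrary $\bs m\in\Z^d$. By the defining property of a set of representatives, there is some $\bn\in{\cW}(A)$ with $\bs m\equiv\bn\pmod{A\Z^d}$, i.e.\ $\bs m-\bn\in A\Z^d$, which means $\bs m-\bn=A\bk$ for some $\bk\in\Z^d$. Hence $\bs m=A\bk+\bn$ lies in the union, establishing $\Z^d\subseteq\bigcup_{\bn\in{\cW}(A)}\{A\bk+\bn:\bk\in\Z^d\}$ and therefore equality.

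For the disjointness identity, suppose $\bn,\bn'\in{\cW}(A)$ with $\bn\neq\bn'$ and, for contradiction, that the two sets share a common element; then $A\bk+\bn=A\bk'+\bn'$ for some $\bk,\bk'\in\Z^d$, so $\bn-\bn'=A(\bk'-\bk)\in A\Z^d$, i.e.\ $\bn\equiv\bn'\pmod{A\Z^d}$. This contradicts the fact that distinct elements of the full set of representatives ${\cW}(A)$ represent distinct cosets. Hence the intersection is empty.

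I do not expect any real obstacle here: the statement is essentially a restatement of what it means to be a complete residue system for $\Z^d$ modulo $A\Z^d$, so the only thing to be careful about is to use \emph{only} the defining property from Definition~\ref{Definition_SetOfDigits} (plus the integrality of $A$ for the trivial inclusion in the first identity) rather than any particular choice of representatives such as $\Z^d\cap A[0,1)^d$ — this keeps the proof valid for all the equivalent choices of ${\cW}(A)$ discussed in the preceding remark. If one prefers a more ``geometric'' argument for the covering identity, one can instead invoke Proposition~\ref{PropositionOnDigits} directly with ${\cW}(A)=\Z^d\cap A[0,1)^d$ and use that $\R^d=\bigsqcup_{\bk\in\Z^d}A([0,1)^d+\bk)$ tiles $\R^d$, intersecting with $\Z^d$; but the congruence argument above is shorter and self-contained.
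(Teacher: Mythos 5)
Your proof is correct and complete: both identities follow exactly as you argue from the defining property of ${\cW}(A)$ as a full set of representatives of $\Z^d/A\Z^d$ (covering from existence of a representative, disjointness from incongruence of distinct representatives), and you rightly avoid committing to the particular choice $\Z^d\cap A[0,1)^d$. Note that the paper itself gives no proof of this proposition but simply refers the reader to the book of Novikov--Protasov--Skopina, so there is nothing to compare against; your argument is the standard one that such a reference would contain, and it fills the gap in a self-contained way.
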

The proof of Propositions~\ref{PropositionOnDigits},~\ref{LemmaDigitsProperties}
can be found, for example, in the book~\cite{NPS}.

\begin{proposition}\label{PropositionMyDigit}
Let $A$ be a $d\times d$ {\em dilation} matrix with integer entries,
then we have
\begin{align*}
  &\bigcup_{j=1}^\infty\bigcup_{\bss\in\cS(A)\setminus\{\bs 0\}} A^j\set{\bk+\bss}{\bk\in\Z^d} = \Z^d\setminus\{\bs 0\},\\
  &A^j\set{\bk+\bss}{\bk\in\Z^d} \cap A^{j'}\set{\bk+\bss'}{\bk\in\Z^d}=\emptyset\qquad
    \mbox{if }j\ne j'\mbox{ or } \bss\ne\bss',
\end{align*}
where the set ${\cS}(A)$ is defined in Definition~\ref{Definition_SetOfDigits}.
\end{proposition}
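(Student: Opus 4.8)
The plan is to deduce everything from Proposition~\ref{LemmaDigitsProperties}, applied one dilation level at a time, together with the elementary fact that the lattices $A^j\Z^d$ form a strictly decreasing chain whose intersection is trivial. First I would record the ``one level'' version. Since $\bs 0\in\cS(A)$ (because $\bs 0\in\cW(A)$ and $A\bs 0=\bs 0$) and the map $\bss\mapsto A\bss$ is a bijection of $\cS(A)$ onto $\cW(A)$ by Definition~\ref{Definition_SetOfDigits}, writing $A\set{\bk+\bss}{\bk\in\Z^d}=\set{A\bk+A\bss}{\bk\in\Z^d}$ and applying Proposition~\ref{LemmaDigitsProperties} with $\bn=A\bss$ give
\begin{equation*}
  \bigcup_{\bss\in\cS(A)}A\set{\bk+\bss}{\bk\in\Z^d}=\Z^d ,
\end{equation*}
the union being disjoint. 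The term $\bss=\bs 0$ contributes $A\Z^d$, so splitting it off yields
\begin{equation*}
  \Z^d\setminus A\Z^d=\bigcup_{\bss\in\cS(A)\setminus\{\bs 0\}}A\set{\bk+\bss}{\bk\in\Z^d},\qquad\text{disjoint union.}
\end{equation*}

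Next I would lift this to level $j$. The powers of $A$ are invertible and matrix multiplication distributes over unions, so applying $A^{j-1}$ to the last identity and using $A^{j-1}(\Z^d\setminus A\Z^d)=A^{j-1}\Z^d\setminus A^{j}\Z^d$ and $A^{j-1}A=A^{j}$, I obtain, for every $j\ge1$,
\begin{equation}\label{eq:levelj}
  A^{j-1}\Z^d\setminus A^{j}\Z^d=\bigcup_{\bss\in\cS(A)\setminus\{\bs 0\}}A^{j}\set{\bk+\bss}{\bk\in\Z^d},\qquad\text{disjoint union.}
\end{equation}
Now I would take the union of \eqref{eq:levelj} over $j\ge1$. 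Since $\Z^d=A^0\Z^d\supseteq A^1\Z^d\supseteq A^2\Z^d\supseteq\cdots$, the sets $A^{j-1}\Z^d\setminus A^{j}\Z^d$ are pairwise disjoint and $\bigcup_{j\ge1}(A^{j-1}\Z^d\setminus A^{j}\Z^d)=\Z^d\setminus\bigcap_{j\ge0}A^{j}\Z^d$. Combined with \eqref{eq:levelj} this already settles the disjointness assertion: two pieces $A^{j}\set{\bk+\bss}{\bk\in\Z^d}$ and $A^{j'}\set{\bk+\bss'}{\bk\in\Z^d}$ with $j\ne j'$ sit inside the disjoint sets on the left of \eqref{eq:levelj} for the respective indices, while for $j=j'$ and $\bss\ne\bss'$ they are disjoint by \eqref{eq:levelj} itself. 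It also shows the union in the first assertion equals $\Z^d\setminus\bigcap_{j\ge0}A^{j}\Z^d$.

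It remains to check that $\bigcap_{j\ge0}A^{j}\Z^d=\{\bs 0\}$, and this is the only place the dilation hypothesis enters. Since every eigenvalue of $A$ exceeds $1$ in absolute value, the spectral radius of $A^{-1}$ is $<1$, hence $A^{-j}\to 0$ as $j\to\infty$; so if $\bn\in A^{j}\Z^d$ for all $j$, then $A^{-j}\bn\in\Z^d$ for all $j$ while $|A^{-j}\bn|\to 0$, which forces $A^{-j}\bn=\bs 0$ for large $j$, i.e.\ $\bn=\bs 0$ (this is the same contraction phenomenon that underlies \eqref{DilationMatrixProperty}). With this, $\Z^d\setminus\bigcap_{j\ge0}A^{j}\Z^d=\Z^d\setminus\{\bs 0\}$ and the first assertion follows. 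The proposition is thus essentially combinatorial: beyond Proposition~\ref{LemmaDigitsProperties} and the invertibility of $A$, the only real input is the triviality of $\bigcap_{j\ge0}A^j\Z^d$, so I expect no genuine obstacle here, only the need to keep the disjointness bookkeeping in \eqref{eq:levelj} and in the telescoping union honest.
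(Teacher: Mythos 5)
Your argument is correct. Note that the paper itself gives no in-text proof of this proposition (it defers to the cited reference), so there is nothing internal to compare against; your write-up is in fact the natural self-contained argument one would expect there. The three ingredients are all sound: (i) the one-level identity $\Z^d\setminus A\Z^d=\bigcup_{\bss\in\cS(A)\setminus\{\bs 0\}}A\set{\bk+\bss}{\bk\in\Z^d}$ (disjointly), which is exactly Proposition~\ref{LemmaDigitsProperties} rewritten via $A\cS(A)=\cW(A)$ and the fact that $\bs 0\in\cW(A)$ under the representative choice $\Z^d\cap A[0,1)^d$ of Proposition~\ref{PropositionOnDigits}; (ii) the transport to level $j$ by the injective map $A^{j-1}$, which preserves disjointness and set differences and uses the integrality of $A$ only through $A\Z^d\subseteq\Z^d$ to make the chain $\Z^d\supseteq A\Z^d\supseteq A^2\Z^d\supseteq\cdots$ nested, so the annuli $A^{j-1}\Z^d\setminus A^j\Z^d$ tile $\Z^d\setminus\bigcap_{j\ge0}A^j\Z^d$; and (iii) the identification $\bigcap_{j\ge0}A^j\Z^d=\{\bs 0\}$, which is the only place the dilation hypothesis enters, and your spectral-radius argument ($\rho(A^{-1})<1$, hence $A^{-j}\bn\to\bs 0$ while remaining in $\Z^d$) works without any diagonalizability assumption, consistent with the paper's definition of a dilation matrix. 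The disjointness bookkeeping (distinct $j$ via distinct annuli, equal $j$ via the one-level disjointness) is honest and complete, so I see no gap.
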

To prove Proposition~\ref{PropositionMyDigit}, we can refer the reader
to Ref.~\cite{Z_ConstrApprox_NDStrFix}.


\section{Similarity to Orthogonal Matrices}\label{Section_Similarity}


Let us recall the definition of isotropic (dilation) matrices.
\begin{definition}
A square matrix is called {\em isotropic} if the matrix is
diagonalizable over $\C$ and all its eigenvalues are equal in absolute value.
\end{definition}

\begin{theorem}\label{Theorem_MyDecomposition}
Let $\te A$ be a square non-singular real matrix.
Suppose the matrix $\te A$ is diagonalizable over $\C$ and all its eigenvalues
are equal to $1$ in absolute value,
then
\begin{equation}\label{MyDecomposition}
  \te A=QUQ^{-1},
\end{equation}
where $U$ is an orthogonal (real)
matrix and $Q$ is a symmetric positive definite
(real) matrix.
\end{theorem}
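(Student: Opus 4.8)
\medskip

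The plan is to manufacture a positive definite quadratic form on $\R^d$ that is invariant under $\te A$; its inverse square root then conjugates $\te A$ to an orthogonal matrix. Precisely, suppose we have found a real symmetric positive definite matrix $P$ with $\te A^{T}P\,\te A=P$, and set $Q:=P^{-1/2}$, the symmetric positive definite square root of $P^{-1}$. Then $Q$ is symmetric positive definite, $Q^{-1}=P^{1/2}$, and $U:=Q^{-1}\te A\,Q$ satisfies
\begin{equation*}
  U^{T}U=Q\,\te A^{T}Q^{-1}Q^{-1}\te A\,Q=Q\,\te A^{T}P\,\te A\,Q=QPQ=I,
\end{equation*}
so $U$ is orthogonal, while $QUQ^{-1}=\te A$. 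Hence the entire proof reduces to producing such a $P$, and $Q$ is then the positive definite similarity transformation matrix referred to in the Introduction.

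To build $P$ I would invoke the complex diagonalization granted by the hypothesis: there is an invertible complex matrix $T$ with $T^{-1}\te A\,T=D$, where $D=\mathrm{diag}(\lambda_{1},\dots,\lambda_{d})$ with $|\lambda_{j}|=1$, so that $\cc D D=I$ (as $D$ is diagonal, $D^{*}=\cc D$). Writing $T^{*}$ for the conjugate transpose of $T$, put $H:=(T^{*})^{-1}T^{-1}=(TT^{*})^{-1}$, which is Hermitian positive definite since $\bs z^{*}H\bs z=|T^{-1}\bs z|^{2}>0$ for $\bs z\ne\bs 0$. Using $\te A=TDT^{-1}$, $\te A^{*}=(T^{*})^{-1}\cc D\,T^{*}$ and $\cc D D=I$, a direct cancellation gives
\begin{equation*}
  \te A^{*}H\,\te A=(T^{*})^{-1}\cc D\,T^{*}(T^{*})^{-1}T^{-1}TDT^{-1}=(T^{*})^{-1}(\cc D D)T^{-1}=H .
\end{equation*}
Since $\te A$ is real, $\te A^{*}=\te A^{T}$, hence $\te A^{T}H\,\te A=H$. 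Now set $P:=\mathrm{Re}\,H$; because $H$ is Hermitian, $P$ is real symmetric whereas $\mathrm{Im}\,H$ is real antisymmetric. Taking real parts in $\te A^{T}H\,\te A=H$ (legitimate because $\te A$ is real) yields $\te A^{T}P\,\te A=P$, and for every real $\bx\ne\bs 0$ one has $\bx^{T}P\bx=\bx^{*}H\bx>0$, the antisymmetric part of $H$ contributing nothing to $\bx^{T}(\cdot)\bx$. Thus $P$ is real symmetric positive definite and $\te A$-invariant, and the first paragraph finishes the argument.

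The only delicate point is the interplay between the real matrix $\te A$ and its unavoidably complex diagonalizing matrix $T$: one has to route through the Hermitian invariant $H$ and then verify that passing to its real part keeps both the invariance and the positive definiteness. An alternative route sidesteps complex matrices entirely: a real diagonalizable matrix all of whose eigenvalues have modulus $1$ is real-similar, via some invertible real $R$, to a block-diagonal matrix $O$ whose blocks are $\pm1$ and planar rotations, and such an $O$ is orthogonal; then the polar decomposition $R=QW$ with $Q$ symmetric positive definite and $W$ orthogonal gives $\te A=ROR^{-1}=Q(WOW^{-1})Q^{-1}$, with $WOW^{-1}$ orthogonal. I would present the invariant-form version, since it exhibits the quadratic form associated with $Q$ explicitly.
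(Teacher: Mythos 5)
Your argument is correct, but it is not the route the paper takes: the paper only sketches its proof (deferring details to the cited references), and that sketch --- diagonalizability of $\te A$ plus the \emph{polar decomposition of the similarity transformation matrix} --- is precisely the ``alternative route'' you mention in your closing paragraph (real similarity $R$ to a block-diagonal orthogonal matrix of $\pm1$'s and planar rotations, then $R=QW$ with $Q$ symmetric positive definite and $W$ orthogonal, giving $\te A=Q\,(WOW^{-1})\,Q^{-1}$). Your primary argument instead constructs an $\te A$-invariant Hermitian form $H=(TT^{*})^{-1}$ from the complex diagonalization, passes to $P=\mathrm{Re}\,H$, and sets $Q=P^{-1/2}$; the two delicate points (that taking real parts preserves both the invariance $\te A^{T}P\te A=P$, because $\te A$ is real, and the positive definiteness, because the antisymmetric part $\mathrm{Im}\,H$ contributes nothing to $\bx^{T}(\cdot)\bx$) are exactly the ones that need checking, and you check them. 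The comparison: the invariant-form version avoids the real canonical form and polar decomposition and has the bonus of exhibiting the invariant quadratic form explicitly --- with your normalization $Q^{-2}=P$ satisfies $\te A^{T}Q^{-2}\te A=Q^{-2}$, so the invariance relations of Corollary~\ref{CorollaryFromMyDecomposition}, on which the quadratic form $P(\bxi)$ of Section~\ref{Section_Similarity} rests, come for free rather than as a consequence of~\eqref{MyDecomposition}; the paper's polar-decomposition route is shorter once one grants the real block-diagonal form, and it identifies $Q$ directly as the positive factor of the similarity transformation matrix. Either proof establishes the theorem as stated.
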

To prove the theorem,
we can refer the reader to Refs.~\cite{Z_ConstrApprox_NDStrFix,Z_SIAM_DilMatrs}.
Here note only that the proof is based on the diagonalizability of
the matrix $\te A$
and the polar decomposition
of the
similarity transformation matrix.

So we see that any real isotropic (dilation) matrix is similar (up to a constant factor)
to an orthogonal matrix.

The next corollary directly follows from Theorem~\ref{Theorem_MyDecomposition} and
will be very useful hereinafter.

\begin{corollary}\label{CorollaryFromMyDecomposition}
From Theorem~\ref{Theorem_MyDecomposition} it follows
\begin{align}
    \label{Q-2Invariance}
    &\te AQ^{2}\te A^T=\te A^{-1}Q^{2}\te A^{-T}=Q^{2},\\
    \label{Q2Invariance}
    &\te A^TQ^{-2}\te A=\te A^{-T}Q^{-2}\te A^{-1}=Q^{-2}.
\end{align}
\end{corollary}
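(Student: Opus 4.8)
The plan is to derive both identities directly from the decomposition \eqref{MyDecomposition}, $\te A=QUQ^{-1}$, together with the orthogonality $UU^T=U^TU=I$ and the symmetry $Q=Q^T$. First I would compute $\te AQ^2\te A^T$ by substituting the decomposition: since $\te A^T=(QUQ^{-1})^T=Q^{-1}U^TQ$, we get
\[
  \te AQ^2\te A^T = QUQ^{-1}\cdot Q^2\cdot Q^{-1}U^TQ = QU\,U^TQ = Q\,U U^T\,Q = Q\cdot I\cdot Q = Q^2,
\]
where the key cancellation is $Q^{-1}Q^2Q^{-1}=I$ and then $UU^T=I$. This gives the first equality in \eqref{Q-2Invariance}.

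Next I would obtain the ``inverse'' form. From $\te AQ^2\te A^T=Q^2$, multiplying on the left by $\te A^{-1}$ and on the right by $\te A^{-T}$ yields $Q^2=\te A^{-1}Q^2\te A^{-T}$, which is exactly the remaining part of \eqref{Q-2Invariance}. Alternatively one can note that $\te A^{-1}=QU^{-1}Q^{-1}=QU^TQ^{-1}$ (again using orthogonality of $U$) and repeat the same one-line computation. For \eqref{Q2Invariance}, the cleanest route is to take the inverse of both sides of \eqref{Q-2Invariance}: from $\te AQ^2\te A^T=Q^2$ we get $(\te A^T)^{-1}Q^{-2}\te A^{-1}=Q^{-2}$, i.e.\ $\te A^{-T}Q^{-2}\te A^{-1}=Q^{-2}$, and from $\te A^{-1}Q^2\te A^{-T}=Q^2$ we get $\te A^TQ^{-2}\te A=Q^{-2}$; here I use that $Q$, hence $Q^2$, is invertible and that inversion reverses products. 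This establishes \eqref{Q2Invariance}.

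There is essentially no obstacle here: the statement is a formal consequence of Theorem~\ref{Theorem_MyDecomposition}, and the only things to be careful about are bookkeeping with transposes and inverses (in particular that $(QUQ^{-1})^T=Q^{-1}U^TQ$ because $Q$ is symmetric) and the fact that $U^{-1}=U^T$. One could equally well phrase the whole argument conjugation-free by observing that \eqref{Q-2Invariance} says precisely that $\te A$ is orthogonal with respect to the inner product $\langle x,y\rangle_{Q^{-2}}:=x^TQ^{-2}y$ (equivalently, $Q^{-1}\te AQ=U$ is orthogonal), which is just a restatement of \eqref{MyDecomposition}; but the direct substitution above is the shortest self-contained proof.
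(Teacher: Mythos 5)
Your proof is correct and matches the paper's intent: the paper simply notes that the corollary follows straightforwardly from the decomposition $\te A=QUQ^{-1}$ of Theorem~\ref{Theorem_MyDecomposition}, which is exactly the direct substitution (using $Q=Q^T$ and $UU^T=I$) that you carry out. Your additional step of obtaining \eqref{Q2Invariance} by inverting \eqref{Q-2Invariance} is a clean way to fill in the ``straightforward'' details.
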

Using~\eqref{MyDecomposition}, the proof is straightforward.

Consider a real square matrix $A$ with integer entries. Suppose that $A$ is isotropic;
then, using Theorem~\ref{Theorem_MyDecomposition},
$A^{-T}$ can be factored as follows
\begin{equation}\label{MatrixBDecomposition}
    A^{-T}= \frac{1}{q^{1/d}} Q^{-1}UQ,
\end{equation}
where $q:=|\det A|  $, $U$ is an orthogonal matrix,
and $Q$ is a symmetric positive definite matrix.
Now we can define a quadratic form
\begin{equation}\label{QuadraticForm}
  P(\bs x):={\bs x}^TQ^{2}\bs x,\quad \bs x\in\R^d.
\end{equation}
Since $Q^{2}$
is positive definite; therefore,
quadratic form~\eqref{QuadraticForm}
is positive definite.
By Corollary~\ref{CorollaryFromMyDecomposition}, we see that the quadratic
form $P(\bx)$
is invariant (up to the constant value)
under the variable transformation
by the matrix $A^{-T}$: $\bs x\mapsto\bs x':= A^{-T}\bs x$.
Indeed, using~\eqref{Q-2Invariance}, we have
\begin{equation}\label{QuadrFormInvariance}
  P(\bs x')=P(A^{-T}\bs x)=\bs x^T A^{-1} Q^{2}A^{-T}\bs x = \frac{1}{q^{2/d}}\bx^TQ^{2}\bx
  = \frac{1}{q^{2/d}}P(\bs x).
\end{equation}
(Similarly, the quadratic form ${\bs x}^TQ^{-2}\bs x$ will be invariant under the transformation
by the matrix $A$, see~\eqref{Q2Invariance}.)

\begin{remark}
Note that the differential operator corresponding to quadratic form~\eqref{QuadraticForm},
i.\,e., $P(\bxi)$ is the symbol of the operator, will be invariant under the coordinate transformation
by the matrix $A$.
\end{remark}

\begin{remark}
The matrix $Q$ in formulas~\eqref{MyDecomposition},~\eqref{MatrixBDecomposition}
(consequently, quadratic form~\eqref{QuadraticForm}) is
defined within a constant factor.
\end{remark}

\begin{remark}
We conjecture that, if an isotropic matrix $A$
is a matrix with integer entries; then, multiplying by
the appropriate real value, the matrix $Q^2$ that corresponds to the matrix
$Q$ in formulas~\eqref{MyDecomposition},~\eqref{MatrixBDecomposition}
can be made a matrix with integer entries. This will be discussed elsewhere.
\end{remark}


\section{Elliptic Scaling Function of the First Order}\label{Section_MaskChoice}


\subsection{Construction of the mask}

Let $A$ be an isotropic dilation matrix and let $A^{-T}$ be factored by
formula~\eqref{MatrixBDecomposition}.
Define a trigonometric function ${G}(\bxi)$ such that its Taylor series about zero begins
with quadratic form~\eqref{QuadraticForm},
i.\,e.,
\begin{equation}\label{m_1SimP}
  {G}(\bxi) := P(\bxi)+\mbox{ higher order terms}, \qquad\bxi\in\R^d.
\end{equation}

Define the mask $m_0$ as follows
\begin{equation}\label{m_0Definition}
  m_0(\bxi):=\dfrac{\prod\limits_{\bss\in{\cS}(A^T)\setminus\{\bs 0\}}{G}(\bxi+2\pi \bss)}
     {\prod\limits_{\bss\in{\cS}(A^T)\setminus\{\bs 0\}}{G}(2\pi \bss)}.
\end{equation}
(In formula~\eqref{m_0Definition}, we suppose that
$G(2\pi s)\ne0$, $\forall s\in\cS(A^T)\setminus\{0\}$.)

Let the matrix $Q^2$ be presented in component-wise form as follows
\begin{equation*}
  Q^2 :=
  \begin{pmatrix}
    q_{11} & q_{12} &\cdots & q_{1d} \\
    q_{12} & q_{22} &\cdots & q_{2d} \\
    \hdotsfor{4}\\
    q_{1d} & q_{2d} &\cdots & q_{dd} \\
  \end{pmatrix},\qquad
q_{ij}\in\R,\ \
i,j=1,\dots,d,\ i\le j;
\end{equation*}
and let $\bs\xi:=(\xi_1,\dots,\xi_d)$. Then quadratic form~\eqref{QuadraticForm} is
\begin{equation}\label{AnotherQuadraticForm}
  P(\bs\xi)
      :=\sum_{1\le i\le d} q_{ii}\xi_i^2+2\sum_{\begin{subarray}{c}
          1\le i,j\le d\\[0.4ex]i<j \end{subarray}} q_{ij}\xi_i\xi_j.
\end{equation}
It is easy to see
that the following trigonometric polynomial
has the required Taylor expansion about zero, see~\eqref{m_1SimP},
\begin{equation}\label{m_1}
  {G}(\xi_1,\dots,\xi_d) := 4\sum_{1\le i\le d} q_{ii}\sin^2\frac{\xi_i}2
    +2\sum_{\begin{subarray}{c}
          1\le i,j\le d\\[0.4ex]i<j \end{subarray}}q_{ij}\sin\xi_i\sin\xi_j.
\end{equation}
Thus, using~\eqref{m_1}, the mask $m_0$ given by~\eqref{m_0Definition} is a trigonometric polynomial.


\subsection{Explicit form of the Fourier transform of the scaling function}


Let $m_0(\bxi)$ be given by~\eqref{m_0Definition}. Acting
similarly to the classical univariate formula
$\prod_{j=1}^\infty\cos\left(\dfrac{\xi}{2^j}\right)=\dfrac{\sin\xi}{\xi}$,
see the book~\cite{Dau}, we can write
\begin{multline}\label{LastExpression}
  \prod_{j=1}^J m_0\left(\left(A^{-T}\right)^j\bs\xi\right)
    = \prod_{j=1}^J \frac{m_0\left(\left(A^{-T}\right)^j\bs\xi\right)
               {G}\left(\left(A^{-T}\right)^j\bs\xi\right)}
        {{G}\left(\left(A^{-T}\right)^j\bs\xi\right)}\\
           = \frac{{G}(\bs\xi)}{{G}\left(\left(A^{-T}\right)^J\bs\xi\right)}\prod_{j=0}^{J-1}
                 \frac{m_0\left(\left(A^{-T}\right)^{j+1}\bs\xi\right)
                          {G}\left(\left(A^{-T}\right)^{j+1}\bs\xi\right)}
                              {{G}\left(\left(A^{-T}\right)^{j}\bs\xi\right)},
\end{multline}
where ${G}(\bxi)$ is the same function as in formula~\eqref{m_0Definition}.
Unfortunately, unlike the univariate case, the fractions under the product sign
in~\eqref{LastExpression} are not canceled.
Denoting
\begin{equation}\label{F}
  \mu(\bs\xi):=\frac{q^{2/d}m_0\left(A^{-T}\bs\xi\right){G}\left(A^{-T}\bs\xi\right)}{{G}(\bs\xi)},
\end{equation}
where $q:=|\det A|$,
we have
\begin{equation}\label{Product}
   \prod_{j=1}^J m_0\left(\left(A^{-T}\right)^j\bs\xi\right)
     = \frac{{G}(\bxi)}{q^{2J/d}{G}\left(\left(A^{-T}\right)^J\bs\xi\right)}\prod_{j=0}^{J-1}
             \mu\left(\left(A^{-T}\right)^j\bs\xi\right).
\end{equation}
Using~\eqref{QuadrFormInvariance}, \eqref{m_1SimP}, we get
the limit of~\eqref{Product} as $J\to\infty$:
\begin{equation}\label{HatPhi1}
  \hat\phi(\bxi) = \frac{{G}(\bs\xi)}{P(\bxi)}{M}(\bs\xi),
\end{equation}
where
\begin{equation}\label{CalF}
  {M}(\bs\xi):=\prod_{j=0}^{\infty}\mu\left(\left(A^{-T}\right)^j\bs\xi\right).
\end{equation}

Note that, in this subsection, we implicitly suppose that the mask
$m_0$ is not necessarily a trigonometric polynomial, but we
suppose that the infinite product in the right-hand side
of~\eqref{HatPhiInfiniteProduct} converges almost everywhere.
However, in the sequel, we shall suppose that $m_0$ is a
trigonometric polynomial, i.\,e., $G$ is given by~\eqref{m_1}.


\subsection{Properties of the scaling function}



\subsubsection{Compact support}


We recall that, for any $d\times d$ dilation matrix and trigonometric polynomial mask,
there exists a unique up to a constant factor {\em compactly supported} solution $\phi\in S'(\R^d)$
of refinement relation~\eqref{ScalingRelation},
see Ref.~\cite{CDM}. Thus we have the following proposition.
\begin{proposition}
For any real isotropic dilation matrix and
mask~\eqref{m_0Definition}, where $G$ is given by
formula~\eqref{m_1}, the corresponding elliptic scaling function,
whose Fourier transform is of the form~\eqref{HatPhi1}, is
compactly supported.
\end{proposition}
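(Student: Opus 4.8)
The plan is to invoke directly the cited existence-and-uniqueness result for compactly supported refinable distributions, and then to verify that the hypotheses of that result are met for our particular mask. The key observation is that the proposition is essentially a bookkeeping statement once we grant the theorem of~\cite{CDM}: for \emph{any} $d\times d$ dilation matrix $A$ and \emph{any} trigonometric polynomial mask $m_0$, there is a unique (up to scalar) compactly supported $\phi\in\mathcal S'(\R^d)$ solving the refinement relation~\eqref{ScalingRelation} with that mask. So the substantive content that remains is threefold: (i) the matrix $A$ is a dilation matrix, which is part of the hypothesis; (ii) the function $m_0$ defined by~\eqref{m_0Definition} is genuinely a trigonometric polynomial; and (iii) the compactly supported refinable distribution produced abstractly by~\cite{CDM} really is the function whose Fourier transform is~\eqref{HatPhi1}, rather than some other solution.

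For step (ii) I would simply recall the computation already carried out in the excerpt: by~\eqref{m_1} the function $G$ is a trigonometric polynomial (a finite linear combination of $\sin^2(\xi_i/2)$ and $\sin\xi_i\sin\xi_j$), hence each factor $G(\bxi+2\pi\bss)$ in the numerator of~\eqref{m_0Definition} is a trigonometric polynomial, the product of finitely many of them (indexed by $\bss\in\cS(A^T)\setminus\{\bs 0\}$, a finite set of cardinality $q-1$) is a trigonometric polynomial, and the denominator $\prod_{\bss}G(2\pi\bss)$ is a nonzero constant by the standing assumption $G(2\pi\bss)\ne0$. Hence $m_0$ is a trigonometric polynomial, and one checks it is $2\pi$-periodic. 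One should also note $m_0(\bs 0)=1$, which is immediate from~\eqref{m_0Definition} since the $\bss=\bs 0$ factor is excluded symmetrically from numerator and denominator, so the infinite product~\eqref{HatPhiInfiniteProduct} is at least formally normalized.

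For step (iii) I would argue that the infinite product $\prod_{j\ge1}m_0((A^{-T})^j\bxi)$ computed in~\eqref{Product}--\eqref{CalF} converges (pointwise a.e., using~\eqref{QuadrFormInvariance} to extract the factor $q^{2J/d}$ against the growth of $1/G((A^{-T})^J\bxi)\sim 1/P((A^{-T})^J\bxi)$, together with convergence of the residual product $\prod_j\mu((A^{-T})^j\bxi)$ to $M(\bxi)$), and that its limit $\hat\phi$ given by~\eqref{HatPhi1} is a tempered distribution satisfying $\hat\phi(\bxi)=m_0(A^{-T}\bxi)\hat\phi(A^{-T}\bxi)$; thus the inverse Fourier transform of~\eqref{HatPhi1} is \emph{a} refinable distributional solution for the mask $m_0$. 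By the uniqueness clause in~\cite{CDM}, the compactly supported solution is unique up to a scalar, so the abstract compactly supported solution and our $\phi$ must coincide (after normalization); in particular $\phi$ is compactly supported.

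The main obstacle is really step (iii): one must be sure that the solution singled out by the infinite-product formula~\eqref{HatPhi1} is the \emph{same} solution as the compactly supported one guaranteed by~\cite{CDM}, i.e.\ that the infinite product does not accidentally pick out a non-compactly-supported solution. The cleanest resolution is to note that for a trigonometric polynomial mask the infinite product~\eqref{HatPhiInfiniteProduct}, when it converges in $\mathcal S'$, \emph{is} the Fourier transform of the unique compactly supported refinable distribution --- this is part of the standard theory in~\cite{CDM,CDM} and can be cited. Everything else is routine: verifying $m_0$ is a trigonometric polynomial and that $A$ being a dilation matrix lets us apply the cited theorem verbatim.
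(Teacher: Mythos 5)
Your proposal is correct and follows essentially the same route as the paper: the paper's proof consists precisely of recalling the result of~\cite{CDM} that a dilation matrix together with a trigonometric polynomial mask admits a unique (up to a constant factor) compactly supported solution of~\eqref{ScalingRelation} in $S'(\R^d)$, having already noted that $m_0$ from~\eqref{m_0Definition} with $G$ as in~\eqref{m_1} is a trigonometric polynomial. Your extra care in step (iii), identifying the infinite-product solution~\eqref{HatPhi1} with the compactly supported one, is a point the paper leaves implicit but is part of the same standard argument.
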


In formula~\eqref{HatPhi1}, the Fourier transform of the elliptic spline is rather simple function.
Thus, in the next subsection, we give our attention to the function $M(\bxi)$. In particular, the behavior
of $M(\bxi)$ at the infinity determines the decay of $\hat\phi(\bxi)$ as $|\bxi|\to\infty$.


\subsubsection{Properties of $M(\bxi)$}


Below we present a lemma about the positive definiteness of the trigonometric polynomial $G$.

\begin{lemma}\label{LemmaZerosG}
  For any quadratic positive definite form~\eqref{AnotherQuadraticForm}, trigonometric
polynomial~\eqref{m_1}
is not negative on $\R^d$ and vanishes only at the points $2\pi \bk$, $\bk\in\Z^d$.
\end{lemma}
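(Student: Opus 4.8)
The plan is to reduce the assertion to a statement about a quadratic form in the new variables $u_i := \sin(\xi_i/2)$ together with a sign analysis of the cross terms. First I would rewrite $\sin\xi_i = 2\sin(\xi_i/2)\cos(\xi_i/2)$, so that with $u_i := \sin(\xi_i/2)$ and $c_i := \cos(\xi_i/2)$ one has
\begin{equation*}
  G(\xi_1,\dots,\xi_d) = 4\sum_{i} q_{ii}u_i^2 + 8\sum_{i<j} q_{ij}u_ic_i\,u_jc_j.
\end{equation*}
The obvious attempt — to compare this directly with $4\,(u\,)^TQ^2(u)$ — fails because of the stray factors $c_i c_j$ in the cross terms, and this mismatch is exactly the main obstacle: the $c_i$ can have either sign, so positivity is not automatic from positive definiteness of $Q^2$.

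To get around this I would argue as follows. Write $v_i := u_i c_i = \tfrac12\sin\xi_i$ and $w_i := u_i|s_i|$... more cleanly: set $a_i := u_i$ and observe $|v_i| = |u_i|\,|c_i| \le |u_i|$, with $v_i^2 = u_i^2(1-u_i^2) \le u_i^2$. Since $Q^2$ is positive definite there is $\lambda>0$ with $x^TQ^2x \ge \lambda|x|^2$, hence by Cauchy–Schwarz applied to the off-diagonal part,
\begin{equation*}
  \Bigl|8\sum_{i<j}q_{ij}v_iv_j\Bigr| = \Bigl|4\bigl(v^TQ^2v - \textstyle\sum_i q_{ii}v_i^2\bigr)\Bigr|,
\end{equation*}
which one bounds using $v^TQ^2v\ge 0$ and $\sum_i q_{ii}v_i^2 \le \sum_i q_{ii}u_i^2$. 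A cleaner route: note $4\sum_i q_{ii}u_i^2 + 8\sum_{i<j}q_{ij}v_iv_j = 4\,v^TQ^2v + 4\sum_i q_{ii}(u_i^2 - v_i^2) = 4\,v^TQ^2v + 4\sum_i q_{ii}u_i^4$, since $u_i^2 - v_i^2 = u_i^2 u_i^2$. Because $q_{ii} = \bs e_i^TQ^2\bs e_i > 0$ for each $i$ and $v^TQ^2v\ge0$, we conclude $G(\bxi)\ge 0$ for all $\bxi\in\R^d$. This identity $u_i^2 - v_i^2 = u_i^4$ is the crux; I would double-check it from $\sin^2(\xi/2)-\tfrac14\sin^2\xi = \sin^2(\xi/2)(1-\cos^2(\xi/2)) = \sin^4(\xi/2)$, which is correct.

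For the characterization of the zero set, equality $G(\bxi)=0$ forces both $v^TQ^2v = 0$ and $\sum_i q_{ii}u_i^4 = 0$. The first, by positive definiteness, gives $v = \bs 0$, i.e. $\sin\xi_i = 0$ for all $i$; the second, since every $q_{ii}>0$, gives $u_i = 0$ for all $i$, i.e. $\sin(\xi_i/2) = 0$ for all $i$, hence $\xi_i \in 2\pi\Z$ for every $i$, that is $\bxi\in 2\pi\Z^d$. Conversely $G(2\pi\bk)=0$ is immediate since each $\sin(\pi k_i)=0$ and $\sin(2\pi k_i)=0$. I expect the only place requiring care is the clean separation of the two nonnegative contributions and verifying the trigonometric identity above; once that identity is in hand, both the nonnegativity and the precise zero set follow with no further work, and I would not expect any genuine difficulty beyond bookkeeping.
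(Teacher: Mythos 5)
Your argument is correct, and it takes a recognizably similar but not identical route to the paper's. Both proofs start from the same half-angle rewriting $G(\bxi)=4\sum_i q_{ii}\sin^2(\xi_i/2)+8\sum_{i<j}q_{ij}\sin(\xi_i/2)\sin(\xi_j/2)\cos(\xi_i/2)\cos(\xi_j/2)$, but from there they diverge: the paper compares $G$ with $P\bigl(\sin(\xi_1/2),\dots,\sin(\xi_d/2)\bigr)$ and concludes by invoking $0\le\cos(\xi_j/2)\le1$ on $[-\pi,\pi]^d$ together with $2\pi$-periodicity, leaving implicit exactly why damping the off-diagonal terms by the factors $\cos(\xi_i/2)\cos(\xi_j/2)\in[0,1]$ cannot destroy nonnegativity; your proof instead evaluates the form at the full-angle sines and produces the exact identity $G(\bxi)=4\,\bs v^TQ^2\bs v+4\sum_i q_{ii}\sin^4(\xi_i/2)$ with $v_i=\tfrac12\sin\xi_i$, which is precisely the missing justification (in matrix terms, the damped matrix is $D_cQ^2D_c+\mathrm{diag}\bigl(q_{ii}(1-c_i^2)\bigr)$ with $D_c=\mathrm{diag}(\cos(\xi_i/2))$, a sum of two positive semidefinite matrices). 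Your decomposition buys a fully global argument (no restriction to $[-\pi,\pi]^d$ and no appeal to periodicity) and makes the zero-set characterization immediate: in fact the diagonal term alone forces $\sin(\xi_i/2)=0$ for every $i$, so the condition $\bs v=\bs 0$ you also extract is redundant (harmlessly so). Two cosmetic remarks: the abandoned Cauchy--Schwarz detour in the middle of your write-up should simply be deleted, and the needed facts $q_{ii}=\bs e_i^TQ^2\bs e_i>0$ and $\sin^2(\xi_i/2)-\tfrac14\sin^2\xi_i=\sin^4(\xi_i/2)$ are verified correctly, so no genuine gap remains.
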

\begin{proof}

Rewrite formula~\eqref{m_1} as follows
$$
  {G}(\xi_1,\dots,\xi_d) = 4\sum_{1\le i\le d} q_{ii}\sin^2\frac{\xi_i}2
     + 8\sum_{\begin{subarray}{c}
          1\le i,j\le d\\[0.4ex]i<j \end{subarray}}q_{ij}
              \sin\frac{\xi_i}2\sin\frac{\xi_j}2\cos\frac{\xi_i}2\cos\frac{\xi_j}2.
$$
Since the quadratic form $P(\bxi)$
is positive definite; we have
\begin{equation*}
  \sum_{1\le i\le d} q_{ii}\sin^2\frac{\xi_i}2
     + 2\sum_{\begin{subarray}{c}
          1\le i,j\le d\\[0.4ex]i<j \end{subarray}}q_{ij}
              \sin\frac{\xi_i}2\sin\frac{\xi_j}2\equiv
                  P\left(\sin\frac{\xi_1}2,\dots,\sin\frac{\xi_d}2\right)\ge0,\quad
                     \forall\bxi\in\R^d,
\end{equation*}
and the trigonometric polynomial $P\left(\sin\dfrac{\xi_1}2,\dots,\sin\dfrac{\xi_d}2\right)$
vanishes iff $\sin\dfrac{\xi_j}2=0,\ j=1,\dots,d$.
Thus, since $0\le\cos\xi_j/2\le1$ for $\xi_j\in[-\pi,\pi]$, $j=1,\dots,d$, and
$G(\bxi)$ is $2\pi$-periodic; it follows that ${G}(\bxi)\ge0$ for all $\bxi\in\R^d$
and ${G}(\bxi)$ vanishes only at the points $2\pi \bk$, $\bk\in\Z^d$.
\end{proof}

It is convenient to rewrite formula~\eqref{m_0Definition} as follows
\begin{equation}\label{m0AnotherForm}
  m_0(\bxi):=\dfrac{\prod\limits_{\bn\in{\cW}(A^T)\setminus\{\bs 0\}}{G}(\bxi+2\pi A^{-T}\bn)}
     {\prod\limits_{\bn\in{\cW}(A^T)\setminus\{\bs 0\}}{G}(2\pi A^{-T}\bn)}.
\end{equation}
Now we can state and prove lemmas about some properties of the function $\mu(\bxi)$.
\begin{lemma}\label{Lemma_MuBounded}
  Let $\mu(\bs\xi)$ be given by~\eqref{F}, where $G(\bxi)$ is given by formula~\eqref{m_1}.
Then $\mu(\bxi)$ is $2\pi$-periodic and
\begin{equation}\label{Mu=1}
  \mu(2\pi\bs k)=1,\qquad \forall\bs k\in\Z^d.
\end{equation}
Moreover, $\mu(\bxi)$ is continuous on $\R^d$; and
there exist constants ${\cA}>0$ and $1\le{\cB}<\infty$ such that ${\cA}\le\mu(\bs\xi)\le{\cB}$,
$\forall\bxi\in\R^d$.
\end{lemma}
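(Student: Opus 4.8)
The plan is to analyze $\mu$ directly from its definition \eqref{F}, namely
$\mu(\bxi)=q^{2/d}m_0(A^{-T}\bxi)G(A^{-T}\bxi)/G(\bxi)$, using the product form \eqref{m0AnotherForm} of the mask and Lemma~\ref{LemmaZerosG} on the zeros of $G$. First I would establish $2\pi$-periodicity: since $G$ is $2\pi$-periodic and $A^{-T}$ maps the lattice $2\pi\Z^d$ into $2\pi\Z^d$ up to elements that permute the digit cosets, both $G(A^{-T}\bxi)$ and $m_0(A^{-T}\bxi)$ (the latter being $2\pi$-periodic as a mask composed with the linear map, using that $m_0$ is $2\pi A^{T}$-periodic) are $2\pi$-periodic in $\bxi$; the denominator $G(\bxi)$ is manifestly $2\pi$-periodic. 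So $\mu$ is $2\pi$-periodic.

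Next, the normalization \eqref{Mu=1}. Substitute $\bxi=2\pi\bk$. By \eqref{QuadrFormInvariance}-type homogeneity and Lemma~\ref{LemmaZerosG}, $G$ vanishes to second order at $2\pi\Z^d$ with leading term $P$; one checks $A^{-T}(2\pi\bk)\in2\pi A^{-T}\Z^d$, which need not lie in $2\pi\Z^d$, so $G(A^{-T}\bxi)$ need not vanish there — instead the vanishing of $G(\bxi)$ in the denominator is matched by the explicit factor in $m_0(A^{-T}\bxi)$ coming from the index $\bn=\bs0$ being excluded in \eqref{m0AnotherForm}: writing out $m_0(A^{-T}\bxi)G(A^{-T}\bxi)$ reconstitutes the full product over all digits, and the factor $G(A^{-T}\bxi+2\pi A^{-T}\bs0)=G(A^{-T}\bxi)$ is precisely the one carrying the second-order zero that cancels $G(\bxi)$. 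Evaluating the resulting ratio of leading quadratic forms at $2\pi\bk$ and using the invariance $P(A^{-T}\bxi)=q^{-2/d}P(\bxi)$ together with the normalizing denominator $\prod_{\bn\neq\bs0}G(2\pi A^{-T}\bn)$ yields exactly $1$.

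For continuity and two-sided bounds: $\mu$ is a ratio of trigonometric polynomials, hence continuous and nonzero wherever the denominator $G(\bxi)$ is nonzero, i.e.\ away from $2\pi\Z^d$ by Lemma~\ref{LemmaZerosG}; the only issue is the removable singularities at the lattice points, where the second-order zero of $G(\bxi)$ in the denominator is cancelled by the matching second-order zero in the numerator (the $\bn=\bs0$ factor of $m_0(A^{-T}\bxi)G(A^{-T}\bxi)$, as above), so $\mu$ extends continuously with value $1$ there. Since $\mu$ is continuous and $2\pi$-periodic it is bounded, and since it is positive on the compact fundamental domain minus small neighborhoods of lattice points and tends to $1$ at those points, it is bounded below by a positive constant $\cA$; set $\cB:=\max(1,\sup\mu)$, which is finite and $\ge1$ because $\mu\equiv1$ on $2\pi\Z^d$. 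The main obstacle is the careful bookkeeping at the lattice points: one must verify that the orders of vanishing of numerator and denominator genuinely match (both exactly two) and compute the limiting ratio correctly, which hinges on using Lemma~\ref{LemmaZerosG} to rule out spurious zeros of the remaining numerator factors $G(A^{-T}\bxi+2\pi A^{-T}\bn)$, $\bn\neq\bs0$, at the points $2\pi\Z^d$ — equivalently, on the fact that $A^{-T}\bn\notin\Z^d$ for $\bn\in\cW(A^T)\setminus\{\bs0\}$, which is Proposition~\ref{LemmaDigitsProperties}.
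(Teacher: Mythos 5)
Your overall strategy is the same as the paper's (analyse $\mu$ as a ratio, use the digit structure of $\cW(A^T)$ to cancel the zeros of $G$, use the invariance \eqref{QuadrFormInvariance} to get the value $1$, then deduce the bounds from periodicity, continuity and non-vanishing), but the periodicity step is wrong as written. Neither $G(A^{-T}\bxi)$ nor $m_0(A^{-T}\bxi)$ is $2\pi$-periodic in $\bxi$: already for $d=1$, $A=2$ one has $G(\xi/2)=4\sin^2(\xi/4)$ and $m_0(\xi/2)=\bigl(1+\cos(\xi/2)\bigr)/2$, which are $4\pi$- but not $2\pi$-periodic. What is $2\pi$-periodic is their \emph{product}: by \eqref{m0AnotherForm}, $m_0(A^{-T}\bxi)G(A^{-T}\bxi)$ equals, up to the constant denominator, the full product $\prod_{\bn\in\cW(A^T)}G(A^{-T}\bxi+2\pi A^{-T}\bn)$ over \emph{all} digits, and the shift $\bxi\mapsto\bxi+2\pi\bk$ only replaces the representatives $\bn$ by $\bn'=\bn+\bk$, which is again a set of digits; since $G$ is $2\pi$-periodic, changing a representative within its coset modulo $A^T\Z^d$ does not change the corresponding factor. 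This coset-rearrangement argument (which is exactly the paper's) is what you need; the ``each factor is separately periodic'' shortcut fails.

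There is a second, related slip in your bookkeeping of the cancellation at a general lattice point $2\pi\bk$, and it is internally contradictory: you first observe (correctly) that $G(A^{-T}\bxi)$ need not vanish at $\bxi=2\pi\bk$, and then assert that this very $\bn=\bs 0$ factor is the one carrying the second-order zero that cancels $G(\bxi)$. In fact, at $2\pi\bk$ with $\bk\notin A^T\Z^d$ the vanishing factor of the full product is $G(A^{-T}\bxi+2\pi A^{-T}\bn^*)$ for the unique digit $\bn^*\equiv-\bk\pmod{A^T\Z^d}$ (Proposition~\ref{LemmaDigitsProperties}), not the $\bn=\bs0$ one; for the same reason your claim that the factors with $\bn\ne\bs0$ have no zeros anywhere on $2\pi\Z^d$ is false. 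The clean repair is the paper's order of argument: first prove $2\pi$-periodicity of $\mu$ via the coset rearrangement above, which reduces both the continuity question and the evaluation \eqref{Mu=1} to a neighbourhood of the origin; there the $\bn=\bs0$ factor is indeed the cancelling one, the other factors are nonzero because $A^{-T}\bn\notin\Z^d$ for digits $\bn\ne\bs0$, and with $m_0(\bs0)=1$, \eqref{m_1SimP} and \eqref{QuadrFormInvariance} the limit is $q^{2/d}P(A^{-T}\bxi)/P(\bxi)=1$. Your concluding step (continuous, $2\pi$-periodic, nowhere vanishing, hence $0<\cA\le\mu\le\cB<\infty$, with $\cB\ge1$ because $\mu(2\pi\bk)=1$) is fine once the above is fixed.
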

\begin{proof}
First, using~\eqref{m0AnotherForm},
for any $\bk\in\Z^d$, we have
\begin{multline*}
  \mu(\bxi+2\pi \bk)=
      \frac{q^{2/d}\prod\limits_{\bn\in{\cW}(A^T)}{G}(A^{-T}(\bxi+2\pi\bk)+2\pi A^{-T}\bn)}
         {G(\bxi+2\pi \bk)\prod\limits_{\bn\in{\cW}(A^T)\setminus\{\bs 0\}}
            {G}(2\pi A^{-T}\bn)}\\
            =\frac{q^{2/d}\prod\limits_{\bn'\in{\cW}(A^T)}{G}(A^{-T}\bxi+2\pi A^{-T}\bn')}
              {G(\bxi)\prod\limits_{\bn\in{\cW}(A^T)\setminus\{\bs 0\}}
                 {G}(2\pi A^{-T}\bn)}=\mu(\bxi)
\end{multline*}
\nopagebreak
(where $\bn'=\bk+\bn$).

Secondly consider the numerator of the fraction in the right-hand side of~\eqref{F}:
\begin{equation}\label{NumeratorMu}
  \prod\limits_{\bn\in{\cW}(A^T)}{G}(A^{-T}\bxi+2\pi A^{-T}\bn).
\end{equation}
Since ${G}(\bxi)$ is $2\pi$-periodic
and vanishes only at the points $2\pi\Z^d$, it follows that the previous expression vanishes
only at the points:
$2\pi(A^T\bk+\bn)$,  $\forall \bk\in\Z^d$ and $\forall\bn\in{\cW}(A^T)$.
Using Proposition~\ref{LemmaDigitsProperties}, we see that numerator~\eqref{NumeratorMu}
vanishes only at the points $2\pi \bk$, $\bk\in\Z^d$; and the zeros of the numerator do not superimpose
by different multipliers under the product sign.

Thirdly, using~\eqref{QuadrFormInvariance},~\eqref{m_1SimP},
since $\mu(\bxi)$ is $2\pi$-periodic, $m_0(0)=1$, and the zeros of the numerator
have the same multiplicity;
we have, for all $\bk\in\Z^d$,
$$
  \mu(2\pi\bk)=\lim_{\bxi\to\bs 0}\frac{q^{2/d}{G}(A^{-T}\bxi)}{{G}(\bxi)}
            =\lim_{\bxi\to\bs 0}\frac{q^{2/d}P(A^{-T}\bxi)}{P(\bxi)}=1.
$$

Finally, since $\mu(\bxi)$ is continuous, $2\pi$-periodic, and vanishes nowhere;
the condition $0<{\cA}\le\mu(\bs\xi)\le{\cB}<\infty$, $\forall\bxi\in\R^d$, is obvious.
\end{proof}

\begin{lemma}
The mask $m_0(\bxi)$ given by~\eqref{m_0Definition}
(or~\eqref{m0AnotherForm}), where $G(\bxi)$ is given
by~\eqref{m_1}, is even; i.\,e.,
$m_0(-\xi_1,\dots,-\xi_d)=m_0(\xi_1,\dots,\xi_d)$,
$\forall\bxi:=(\xi_1,\dots,\xi_d)\in\R^d$.
\end{lemma}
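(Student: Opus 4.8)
The plan is to show that the mask $m_0$ is even by tracing the evenness of the building block $G$ through the definition~\eqref{m_0Definition}, together with a symmetry of the set of digits $\cS(A^T)$ under negation. First I would observe that $G$, as given by~\eqref{m_1}, is even: each term $\sin^2(\xi_i/2)$ is manifestly even, and each cross term $\sin\xi_i\sin\xi_j$ is a product of two odd functions, hence even. So $G(-\bxi)=G(\bxi)$ for all $\bxi\in\R^d$.

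Next I would handle the index set. For the set $\cS(A^T)\setminus\{\bs 0\}\subset[0,1)^d$, the points $-\bss$ need not lie in $[0,1)^d$; however, as noted in the Remark following Definition~\ref{Definition_SetOfDigits}, the set $-\cW(A^T)$ is again a valid set of digits for $A^T$, equivalently $-\cS(A^T)$ can be taken as the set $\cS(A^T)$ (up to a permutation, and up to shifting each element back into $[0,1)^d$ by an integer vector). Concretely, for each $\bss\in\cS(A^T)\setminus\{\bs 0\}$ there is a unique $\bk_{\bss}\in\Z^d$ and a unique $\bss'\in\cS(A^T)\setminus\{\bs 0\}$ with $-\bss=\bk_{\bss}+\bss'$, and $\bss\mapsto\bss'$ is a bijection of $\cS(A^T)\setminus\{\bs 0\}$ onto itself. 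Using the $2\pi$-periodicity of $G$ (so $G(\bxi+2\pi\bk_{\bss})=G(\bxi)$ shifts are harmless) together with the evenness of $G$, I get
\[
  G(-\bxi+2\pi\bss)=G\bigl(-(\bxi-2\pi\bss)\bigr)=G(\bxi-2\pi\bss)=G\bigl(\bxi+2\pi(-\bss)\bigr)=G(\bxi+2\pi\bss').
\]
Hence the numerator $\prod_{\bss}G(-\bxi+2\pi\bss)$ equals $\prod_{\bss'}G(\bxi+2\pi\bss')$, i.e. the same product reindexed; likewise in the denominator $\prod_{\bss}G(2\pi\bss)=\prod_{\bss}G(-2\pi\bss)=\prod_{\bss'}G(2\pi\bss')$. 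Therefore $m_0(-\bxi)=m_0(\bxi)$.

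The only step requiring care — the main (if minor) obstacle — is the bookkeeping with $\cS(A^T)$: one must be precise that negation followed by reduction modulo $\Z^d$ is a bijection of $\cS(A^T)\setminus\{\bs 0\}$ (the origin is fixed, and every nonzero coset maps to a nonzero coset), so that the two products run over exactly the same multiset of values of $G$. Once the $2\pi$-periodicity of $G$ is invoked to absorb the integer shifts $2\pi\bk_{\bss}$, the rest is a direct reindexing. An alternative, cleaner route is to use the form~\eqref{m0AnotherForm}: there the index set is $\cW(A^T)\setminus\{\bs 0\}$, which by the Remark is closed under negation (up to the harmless ambiguity in the choice of digits), and $G(-\bxi+2\pi A^{-T}(-\bn))=G(\bxi-2\pi A^{-T}(-\bn))$ follows immediately from evenness of $G$, giving the claim with essentially no index juggling.
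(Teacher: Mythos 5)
Your proposal is correct and follows essentially the same route as the paper: evenness of $G$ from~\eqref{m_1}, then reindexing the product over the negated digits (the paper sets $\bn'=-\bn$ in the form~\eqref{m0AnotherForm}, relying on the Remark that $-\cW(A^T)$ is again a set of digits). Your version merely spells out the coset bookkeeping and the use of $2\pi$-periodicity more explicitly than the paper does, which is fine but not a different argument.
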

\begin{proof}
  Since polynomial~\eqref{m_1} is even, we have
\begin{multline*}
  m_0(-\bxi)=\dfrac{\prod\limits_{\bn\in{\cW}(A^T)\setminus\{\bs 0\}}{G}(-\bxi+2\pi A^{-T}\bn)}
     {\prod\limits_{\bn\in{\cW}(A^T)\setminus\{\bs 0\}}{G}(2\pi A^{-T}\bn)}\\
     =\dfrac{\prod\limits_{\bn'\in{\cW}(A^T)\setminus\{\bs 0\}}{G}(\bxi+2\pi A^{-T}\bn')}
     {\prod\limits_{\bn\in{\cW}(A^T)\setminus\{\bs 0\}}{G}(2\pi A^{-T}\bn)}=m_0(\bxi)
\end{multline*}
(where $\bn'=-\bn$).
\end{proof}

\begin{lemma}
  For function $\mu(\bxi)$ given by~\eqref{F},
where $m_0(\bxi)$ given by~\eqref{m_0Definition} and
$G(\bxi)$ by~\eqref{m_1},
the following estimations are valid
\begin{equation}\label{MuEstimations}
  1-C'P(\bxi) \le {\cal\mu}(\bxi)\le 1+C''P(\bxi),\quad C',C''>0,\ \forall\bxi\in\R^d,
\end{equation}
where $P(\bxi)$ is given by~\eqref{AnotherQuadraticForm}.
\end{lemma}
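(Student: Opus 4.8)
The plan is to localise. Near every point of $2\pi\Z^d$ the estimate~\eqref{MuEstimations} is a statement about the behaviour of $\mu$ at a lattice point, and by the $2\pi$-periodicity of $\mu$ (Lemma~\ref{Lemma_MuBounded}) this reduces to the behaviour at $\bs 0$; away from the lattice the positive definite form $P$ is bounded below while $\mu-1$ is bounded, so~\eqref{MuEstimations} is automatic there. So I would first dispatch the region away from the lattice, then prove the single asymptotic statement $\mu(\bxi)=1+O(|\bxi|^2)$ as $\bxi\to\bs 0$, and finally patch the two, enlarging $C'$ and $C''$ as needed.

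For the region away from the lattice, fix $\varepsilon>0$ and consider the closed set $E_\varepsilon:=\{\bxi\in\R^d:\operatorname{dist}(\bxi,2\pi\Z^d)\ge\varepsilon\}$. On $E_\varepsilon$ the form $P$ is continuous, strictly positive (it vanishes only at $\bs 0$, and $\bs 0\notin E_\varepsilon$) and coercive, hence $P\ge p_0$ on $E_\varepsilon$ for some $p_0>0$. By Lemma~\ref{Lemma_MuBounded} we have $|\mu-1|\le\cB+1$ on all of $\R^d$, so $|\mu(\bxi)-1|\le\frac{\cB+1}{p_0}P(\bxi)$ on $E_\varepsilon$, which gives both inequalities in~\eqref{MuEstimations} there. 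By periodicity of $\mu$ it then remains only to control $\mu$ on a single $\varepsilon$-ball about $\bs 0$ (at a lattice point $2\pi\bk\ne\bs 0$ the form $P$ is already of size $\asymp|\bk|^2$, so the bound is even more comfortable there).

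For the behaviour at the origin, put $q:=|\det A|$ as in~\eqref{MatrixBDecomposition} and abbreviate $\Pi(\bxi):=\prod_{\bss\in\cS(A^T)\setminus\{\bs 0\}}G(A^{-T}\bxi+2\pi\bss)$, $c_0:=\Pi(\bs 0)=\prod_{\bss\in\cS(A^T)\setminus\{\bs 0\}}G(2\pi\bss)$. Combining~\eqref{F} with~\eqref{m_0Definition} gives
\[\mu(\bxi)=\frac{q^{2/d}\,G(A^{-T}\bxi)\,\Pi(\bxi)}{c_0\,G(\bxi)}.\]
By Lemma~\ref{LemmaZerosG} the points $2\pi\bss$ with $\bss\in\cS(A^T)\setminus\{\bs 0\}$ do not lie in $2\pi\Z^d$, so $\Pi$ is real-analytic and nonzero near $\bs 0$ with $\Pi(\bs 0)=c_0\neq0$; in particular $\Pi(\bxi)=c_0+O(|\bxi|)$. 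Expanding the trigonometric polynomial~\eqref{m_1} about $\bs 0$ gives $G(\bxi)=P(\bxi)+O(|\bxi|^4)$ (no cubic term, since $G$ is even), and the invariance~\eqref{QuadrFormInvariance} gives $P(A^{-T}\bxi)=q^{-2/d}P(\bxi)$, hence $q^{2/d}G(A^{-T}\bxi)=P(\bxi)+O(|\bxi|^4)$. Therefore the numerator of $\mu(\bxi)-1$ over the common denominator $c_0G(\bxi)$,
\[N(\bxi):=q^{2/d}G(A^{-T}\bxi)\,\Pi(\bxi)-c_0\,G(\bxi)=\bigl(P(\bxi)+O(|\bxi|^4)\bigr)\bigl(c_0+O(|\bxi|)\bigr)-c_0\bigl(P(\bxi)+O(|\bxi|^4)\bigr)=O(|\bxi|^3),\]
has no Taylor terms of degree $\le 2$. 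But $N$ is also even and real-analytic: since $m_0$ is even (preceding lemma) and $G$ is even, $\mu$ is even, so $N=c_0G\cdot(\mu-1)$ is even; hence its degree-$3$ part vanishes as well and $N(\bxi)=O(|\bxi|^4)$. Since $G(\bxi)\asymp|\bxi|^2$ near $\bs 0$ (from $G=P+O(|\bxi|^4)$ and positive definiteness of $P$), we obtain $\mu(\bxi)-1=N(\bxi)/(c_0G(\bxi))=O(|\bxi|^2)$, and $|\bxi|^2\le P(\bxi)/\lambda_{\min}$, with $\lambda_{\min}>0$ the least eigenvalue of $Q^2$, gives $|\mu(\bxi)-1|\le C\,P(\bxi)$ for $|\bxi|$ small. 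Combined with the previous paragraph this establishes~\eqref{MuEstimations}.

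The step I expect to be the crux is showing that $N$ vanishes to order $4$, not merely $3$, at the origin. The order-$2$ cancellation is exactly the invariance $P(A^{-T}\bxi)=q^{-2/d}P(\bxi)$ of Corollary~\ref{CorollaryFromMyDecomposition}, and the order-$3$ cancellation is forced by the evenness of $\mu$ (equivalently of $m_0$); without the latter one would only get $\mu-1=O(|\bxi|)$, too weak to be dominated by $P$. Everything else — Taylor bookkeeping for trigonometric polynomials, the equivalence $P(\bxi)\asymp|\bxi|^2$, and the coercivity/periodicity argument away from the lattice — is routine.
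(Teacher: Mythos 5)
Your proof is correct and follows essentially the same route as the paper: the paper likewise combines the invariance $P(A^{-T}\bxi)=q^{-2/d}P(\bxi)$ with the evenness of $G$ and $m_0$ to conclude that $\mu(\bxi)-1$ has an even Taylor expansion starting at degree $2$ at the origin (it expands $m_0$ directly instead of passing through $\Pi$ and the evenness of $N$), and then uses the boundedness of $\mu$ from Lemma~\ref{Lemma_MuBounded} where $P$ is bounded away from zero, exactly as in your patching argument. The differences are purely presentational (your explicit $E_\varepsilon$ decomposition and the cubic-term cancellation via evenness of $N$ are handled implicitly in the paper through the even Taylor series of $m_0$), so there is no gap.
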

\begin{proof}
 The Taylor series expansion for trigonometric polynomial~\eqref{m_1} about zero is of the form
$$
  {G}(\bxi) = P(\bxi)+\sum_{\begin{subarray}{c}
          k\in\Z^d_{\ge0}\\[0.4ex]|k|=4,6,8,\dots \end{subarray}}a_k\bxi^k,\qquad a_k\in\R,\ \bxi\in\R^d.
$$
Since $m_0(\bxi)$ is an even function, its Taylor series about zero includes only even powers:
$$
  m_0(\bxi) = 1+\sum_{\begin{subarray}{c}
          k\in\Z^d_{\ge0}\\[0.4ex]|k|=2,4,6,\dots \end{subarray}}
             b_k\bxi^k,\qquad b_k\in\R,\ \bxi\in\R^d.
$$
Using property~\eqref{QuadrFormInvariance}, we have
\begin{multline}\label{MuExpansion}
    \mu(\bs\xi)=\dfrac{q^{2/d}\left(1+\sum\limits_{\begin{subarray}{c}
          k\in\Z^d_{\ge0}\\[0.4ex]|k|=2,4,6,\dots \end{subarray}}
             b_k\left(A^{-T}\bxi\right)^k
    \right)\left(P(A^{-T}\bxi)+\sum\limits_{\begin{subarray}{c}
          k\in\Z^d_{\ge0}\\[0.4ex]|k|=4,6,8,\dots \end{subarray}}a_k
            \left(A^{-T}\bxi\right)^k\right)}{P(\bxi)
              +\sum\limits_{\begin{subarray}{c}
          k\in\Z^d_{\ge0}\\[0.4ex]|k|=4,6,8,\dots \end{subarray}}a_k\bxi^k}\\[2ex]
          =\dfrac{
      P(\bxi)+\sum\limits_{\begin{subarray}{c}
          k\in\Z^d_{\ge0}\\[0.4ex]|k|=4,6,8,\dots \end{subarray}}a_k'
            \bxi^k}{P(\bxi)
              +\sum\limits_{\begin{subarray}{c}
          k\in\Z^d_{\ge0}\\[0.4ex]|k|=4,6,8,\dots \end{subarray}}a_k\bxi^k}
                  =1+\sum_{\begin{subarray}{c}
          k\in\Z^d_{\ge0}\\[0.4ex]|k|=2,4,6,\dots \end{subarray}}a_k''\bxi^k,\qquad a_k',a_k''\in\R,\ \ \bxi\in\R^d.
\end{multline}
By Lemma~\ref{Lemma_MuBounded}, $\mu(\bxi)$ is bounded. Since expansion~\eqref{MuExpansion} includes only even powers,
it follows that there exist constants $C',C''>0$ such that, for all $\bxi\in\R^d$,
we have estimations~\eqref{MuEstimations}.
\end{proof}

In the following theorem,
the convergence of infinite product~\eqref{CalF} and
continuity of the function ${M}(\bxi)$ are considered.

\begin{theorem}\label{Theorem_OnMContinuity}
Let $\mu(\bs\xi)$ be given by~\eqref{F} and ${M}(\bxi)$ be given
by~\eqref{CalF}. Suppose ${G}(\bxi)$ is of the form~\eqref{m_1},
then infinite product~\eqref{CalF} converges absolutely and
uniformly on any compact set. Moreover, the function $M(\bxi)$ is
continuous on $\R^d$.
\end{theorem}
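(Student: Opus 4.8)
The plan is to show that the infinite product $\prod_{j=0}^\infty\mu((A^{-T})^j\bxi)$ converges absolutely and locally uniformly by reducing the question to summability of $\sum_j P((A^{-T})^j\bxi)$, using the two-sided estimate from the preceding lemma. First I would fix an arbitrary compact set $K\subset\R^d$ and a point $\bxi\in K$. From the estimate $1-C'P(\bxi)\le\mu(\bxi)\le 1+C''P(\bxi)$ (valid for all $\bxi$, and in particular giving $|\mu(\bxi)-1|\le C\,P(\bxi)$ with $C:=\max(C',C'')$ on the region where $P$ is small), absolute and uniform convergence of the infinite product $\prod_{j\ge0}\mu((A^{-T})^j\bxi)$ is, by the standard criterion for infinite products, equivalent to the absolute and uniform convergence of the series $\sum_{j\ge0}|\mu((A^{-T})^j\bxi)-1|$, hence it suffices to bound $\sum_{j\ge0}P((A^{-T})^j\bxi)$ uniformly on $K$. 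Here I must be slightly careful: the estimate $|\mu-1|\le C\,P$ is the tool, but $\mu$ is globally bounded above by $\cB$ (Lemma~\ref{Lemma_MuBounded}), so there is no issue with factors being close to $0$; only the tail matters, and on the tail $P((A^{-T})^j\bxi)\to0$, so eventually all factors lie in, say, $(1/2,3/2)$.

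The key computation is the geometric decay of $P((A^{-T})^j\bxi)$. By the invariance identity~\eqref{QuadrFormInvariance}, $P(A^{-T}\bs y)=q^{-2/d}P(\bs y)$ for every $\bs y\in\R^d$, where $q=|\det A|>1$. Iterating, $P((A^{-T})^j\bxi)=q^{-2j/d}P(\bxi)$. Therefore
\begin{equation*}
  \sum_{j=0}^\infty P\bigl((A^{-T})^j\bxi\bigr)=P(\bxi)\sum_{j=0}^\infty q^{-2j/d}=\frac{P(\bxi)}{1-q^{-2/d}},
\end{equation*}
and since $P$ is a fixed positive definite quadratic form, $\sup_{\bxi\in K}P(\bxi)=:M_K<\infty$. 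Hence $\sum_{j\ge0}|\mu((A^{-T})^j\bxi)-1|\le C\,M_K/(1-q^{-2/d})$ uniformly on $K$, with the bound independent of $\bxi\in K$. This gives absolute and uniform convergence of~\eqref{CalF} on $K$, and since $K$ was arbitrary, on every compact set.

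Continuity of $M$ then follows because each partial product $\prod_{j=0}^{J}\mu((A^{-T})^j\bxi)$ is continuous (each $\mu$ is continuous by Lemma~\ref{Lemma_MuBounded}, and compositions with the linear maps $(A^{-T})^j$ are continuous), and a uniformly convergent sequence of continuous functions on a compact set has a continuous limit; continuity being a local property, $M$ is continuous on all of $\R^d$. The only point requiring care — the main (mild) obstacle — is the bookkeeping at small $j$: the factors $\mu((A^{-T})^j\bxi)$ for small $j$ need not be near $1$, so the product-to-sum reduction should be applied to the tail $j\ge J_K$ (where $J_K$ is chosen, using $P((A^{-T})^j\bxi)=q^{-2j/d}P(\bxi)\le q^{-2j/d}M_K$, so that all tail factors lie in $(1/2,3/2)$ for $\bxi\in K$), with the finitely many initial factors $\prod_{j<J_K}\mu((A^{-T})^j\bxi)$ handled directly as a finite product of continuous, bounded, nonvanishing functions via Lemma~\ref{Lemma_MuBounded}. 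Everything else is routine.
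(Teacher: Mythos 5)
Your proposal is correct and follows essentially the same route as the paper's proof: from the two-sided estimate one gets $|\mu(\bxi)-1|\le C\,P(\bxi)$, the invariance $P(A^{-T}\bxi)=q^{-2/d}P(\bxi)$ makes $\sum_j P\bigl((A^{-T})^j\bxi\bigr)$ a geometric series bounded uniformly on compacts, hence the series $\sum_j|\mu((A^{-T})^j\bxi)-1|$ and therefore the infinite product converge absolutely and uniformly there, and continuity of $M$ follows from uniform convergence of the continuous partial products. Your extra bookkeeping for the initial factors is harmless but not needed, since the estimate $|\mu-1|\le C P$ from the lemma holds on all of $\R^d$, not only where $P$ is small.
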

\begin{proof}
From~\eqref{MuEstimations}, we have
$$
  |\mu(\bxi)-1|\le CP(\bxi),\qquad C>0,\ \forall\bxi\in\R^d.
$$
Using the previous inequality and property~\eqref{QuadrFormInvariance}, we see that
the series $\sum_{j=0}^\infty\left|\mu\left(\left(A^{-T}\right)^j\bxi\right)-1\right|$
uniformly converges on any compact set. Thus infinite product~\eqref{CalF} also uniformly converges on any
compact set.

Moreover, since $\mu(\bxi)$ is continuous
and the partial products converge uniformly to ${M}(\bxi)$
on compact sets; the function ${M}(\bxi)$ is continuous on $\R^d$.
\end{proof}

Below we present an upper
estimate of $M(\bxi)$ at the infinity.
In fact, this estimate is similar to the brute force estimates of
the smoothness of compactly supported univariate wavelets~\cite{Dau1988,Dau}.

\begin{theorem}\label{Theorem_CalMuEstimations}
Let the functions $\mu(\bxi)$, ${M}(\bs\xi)$ satisfy the conditions
of Theorem~\ref{Theorem_OnMContinuity}.
Suppose
\begin{equation}\label{q}
  {\cB}:=\sup_{\bxi\in\R^d}\mu(\bxi),
\end{equation}
then
we have
\begin{equation}\label{HatPhiDecay}
  {M}(\bxi)  \le C\left(1+|\bxi|\right)^{d\log_q{\cB}},\qquad C>0,\ \forall\bxi\in\R^d,
\end{equation}
where $q=|\det A|$.
\end{theorem}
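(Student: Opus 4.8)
The plan is to adapt the classical brute‑force decay estimate for refinable functions (cf.~\cite{Dau1988,Dau}) to the present anisotropic setting, using the scaling invariance~\eqref{QuadrFormInvariance} of the quadratic form $P$ in the role played by dyadic scaling in the univariate case. First I would extract two elementary consequences of the factorization~\eqref{MatrixBDecomposition}: writing $\left(A^{-T}\right)^j=q^{-j/d}Q^{-1}U^jQ$ with $U$ orthogonal, one obtains constants $0<c_2\le c_1$, depending only on $Q$, with
$$
  c_2\,q^{-j/d}|\bxi|\ \le\ \left|\left(A^{-T}\right)^j\bxi\right|\ \le\ c_1\,q^{-j/d}|\bxi|,\qquad j\ge0,\ \bxi\in\R^d,
$$
so that $\left(A^{-T}\right)^j\bxi\to\bs 0$. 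Since $P$ is a positive definite quadratic form there is $\lambda>0$ with $P(\bxi)\le\lambda|\bxi|^2$, and iterating~\eqref{QuadrFormInvariance} gives $P\!\left(\left(A^{-T}\right)^j\bxi\right)=q^{-2j/d}P(\bxi)$.

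Next, I would fix a radius $R>0$ and, for $\bxi\ne\bs 0$, let $J=J(\bxi)$ be the smallest nonnegative integer with $\left|\left(A^{-T}\right)^J\bxi\right|\le R$ (well defined by the above). The minimality of $J$, together with the upper bound $\left|\left(A^{-T}\right)^{J-1}\bxi\right|\le c_1q^{-(J-1)/d}|\bxi|$, forces $q^{(J-1)/d}\le c_1|\bxi|/R$ whenever $J\ge1$, hence $J\le\max\{0,\ d\log_q|\bxi|+C_0\}$ for a constant $C_0$ independent of $\bxi$. I would then split the product~\eqref{CalF} as
$$
  M(\bxi)=\Bigg(\prod_{j=0}^{J-1}\mu\!\left(\left(A^{-T}\right)^j\bxi\right)\Bigg)
          \Bigg(\prod_{j=J}^{\infty}\mu\!\left(\left(A^{-T}\right)^j\bxi\right)\Bigg).
$$
For the finite head factor I use only $0<\cA\le\mu\le\cB$ from Lemma~\ref{Lemma_MuBounded}, which gives the bound $\cB^{\,J}$. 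For the tail factor I use the global upper estimate $\mu(\bs\eta)\le1+C''P(\bs\eta)$ from~\eqref{MuEstimations}: for $j\ge J$ the invariance yields $P\!\left(\left(A^{-T}\right)^j\bxi\right)=q^{-2(j-J)/d}P\!\left(\left(A^{-T}\right)^J\bxi\right)\le\lambda R^2\,q^{-2(j-J)/d}$, so
$$
  \prod_{j=J}^{\infty}\mu\!\left(\left(A^{-T}\right)^j\bxi\right)
     \ \le\ \prod_{k=0}^{\infty}\Big(1+C''\lambda R^2\,q^{-2k/d}\Big)\ =:\ C_1\ <\ \infty,
$$
the product converging because $q>1$; the key point is that $C_1$ is independent of $\bxi$.

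Finally, combining the two bounds gives $M(\bxi)\le C_1\cB^{\,J}$. Since $A$ is a dilation matrix we have $q>1$, and $\cB\ge1$ by Lemma~\ref{Lemma_MuBounded}, so the exponent $d\log_q\cB$ is nonnegative and $t\mapsto\cB^{t}$ is nondecreasing; using $J\le\max\{0,d\log_q|\bxi|+C_0\}$ and the identity $\cB^{\log_q|\bxi|}=|\bxi|^{\log_q\cB}$ one gets $\cB^{\,J}\le\max\{1,\ \cB^{C_0}|\bxi|^{d\log_q\cB}\}\le C_2\,(1+|\bxi|)^{d\log_q\cB}$, which is precisely~\eqref{HatPhiDecay}. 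I expect the only genuinely delicate part to be the bookkeeping of constants: choosing $R$, verifying that the tail product is bounded uniformly in $\bxi$, and checking that $J(\bxi)$ grows no faster than $d\log_q|\bxi|$; once these are settled the estimate is immediate.
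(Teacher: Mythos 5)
Your proof is correct and follows essentially the same brute-force strategy as the paper: split the infinite product at the index $J(\bxi)$ where the iterates $\left(A^{-T}\right)^j\bxi$ enter a fixed neighbourhood of the origin, bound the $J$ head factors by $\cB^J$ via Lemma~\ref{Lemma_MuBounded}, bound the tail uniformly in $\bxi$ using $\mu\le 1+C''P$ together with the invariance~\eqref{QuadrFormInvariance}, and note that $J$ grows like $d\log_q|\bxi|$. The only cosmetic difference is that the paper measures the entry time with the ellipsoids $\set{\bxi}{P(\bxi)\le q^{2j/d}}$ (and bounds the tail by an exponential) while you use Euclidean balls via the factorization~\eqref{MatrixBDecomposition}; the two are equivalent up to constants.
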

\begin{proof}
By ${\cL}_j$ denote an ellipsoid
$$
  {\cL}_j := \set{\bxi\in\R^d}{P(\bxi)\le q^{2j/d}}.
$$
Using~\eqref{MuEstimations}, we can estimate the function $\mu(\bxi)$ as follows
$$
  \mu(\bxi)\le 1+CP(\bxi)\le \exp\left[CP(\bxi)\right],\qquad \forall\bxi\in\R^d;
$$
and,
using property~\eqref{QuadrFormInvariance}, we have
\begin{equation}\label{ProductMuEstimation}
  \sup_{\bxi\in{\cL}_0}\prod_{j=0}^\infty\mu\left(\left(A^{-T}\right)^j\bxi\right)\le
     \sup_{\bxi\in{\cL}_0}\prod_{j=0}^\infty\exp\left[Cq^{-2j/d}P(\bxi)\right]=\exp\left(C\frac{q^{2/d}}
            {1-q^{2/d}}\right).
\end{equation}

Suppose $\bxi\not\in{\cL}_0$, then there exists a number $J\ge0$ such that
$\bxi\in{\cL}_{J+1}\setminus{\cL}_{J}$. Thus,
\begin{multline*}
 {M}(\bxi)=\prod_{j=0}^\infty\mu\left(\left(A^{-T}\right)^j\bxi\right)
   = \prod_{j=0}^J\mu\left(\left(A^{-T}\right)^j\bxi\right)
       \prod_{j={J+1}}^\infty\mu\left(\left(A^{-T}\right)^j\bxi\right)\\
         = \prod_{j=0}^J\mu\left(\left(A^{-T}\right)^j\bxi\right)
             \prod_{j={0}}^\infty\mu\left(\left(A^{-T}\right)^j
                      \left(A^{-T}\right)^{J+1}\bxi\right).
\end{multline*}
Since $\left(A^{-T}\right)^{J+1}\bxi\in{\cL}_0$; by~\eqref{ProductMuEstimation} and~\eqref{q},
we have ${M}(\bxi)\le {\cB}^{J+1}\exp\left(C\frac{q^{2/d}}
{1-q^{2/d}}\right)$.
If $\bxi\in{\cL}_{J+1}\setminus{\cL}_{J}$;
then $C_1 q^{\frac{J}d}\le|\bxi|\le C_2 q^{\frac{J+1}d}$, $C_1,C_2>0$.
Consequently we have estimation~\eqref{HatPhiDecay}.
\end{proof}

\begin{remark}\label{RemarkOnBruteForceNon-optimality}
  Note that the brute force estimations are very non-optimal. But, in the present paper,
we shall not consider better estimations.
\end{remark}


On the other hand,
if $\cB$ is close to $1$ (recall that $\cB\ge1$), then we can suppose that the function
$M(\bxi)$ decays
as $|\bxi|\to\infty$.
Nevertheless below we show
that ${M}(\bxi)$
does not decay at the infinity.


\begin{theorem}\label{Theorem_UnvanishingOfMu}
Let the functions $\mu(\bxi)$, ${M}(\bs\xi)$ satisfy the conditions
of Theorem~\ref{Theorem_OnMContinuity}.
Let $\bk\in\Z^d$ and $\bs s\in\cS(A^T)\setminus\{\bs 0\}$.
Then, for an arbitrary large $r>\left|2\pi(\bk+\bss)\right|$,
there exists a point $\bs\xi\in\R^d$, $|\bs\xi|>r$, such that ${M}(\bs\xi)
={M}(2\pi (\bs k+\bs s))>0$.
\end{theorem}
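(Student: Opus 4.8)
The key observation is that $M(\bxi)$, as defined by the infinite product \eqref{CalF}, satisfies a self-similarity/quasi-invariance under the map $\bxi \mapsto A^T\bxi$. Indeed, from \eqref{CalF} we have
$$
  M(\bxi) = \mu(\bxi)\,M\left(A^{-T}\bxi\right),
$$
so equivalently $M(A^T\bxi) = \mu(A^T\bxi)^{-1}\, M(\bxi)$, and iterating,
$$
  M\left((A^T)^j\bxi\right) = M(\bxi)\prod_{i=1}^{j}\mu\left((A^T)^i\bxi\right)^{-1}.
$$
The strategy is to start from the point $\bxi_0 := 2\pi(\bk+\bss)$ and push it out to infinity by repeatedly applying $A^T$, which by the dilation property \eqref{DilationMatrixProperty} sends $|(A^T)^j\bxi_0|\to\infty$ since $\bxi_0\ne\bs 0$ (note $\bss\ne\bs 0$ and $\bss\in[0,1)^d$ forces $\bk+\bss\ne\bs 0$). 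It then remains to control the correction factor $\prod_{i=1}^{j}\mu((A^T)^i\bxi_0)^{-1}$ and show it neither vanishes nor blows up, so that $M((A^T)^j\bxi_0)$ stays bounded away from $0$ along this sequence — or, more precisely, to identify a subsequence where the product of $\mu$-factors equals $1$.

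\textbf{Key steps.} First I would verify that the iterates $(A^T)^i\bxi_0$, for $i=1,2,\dots$, all lie on the lattice $2\pi\Z^d$. This is where the hypothesis $\bss\in\cS(A^T)$ is used: by Definition~\ref{Definition_SetOfDigits}, $A^T\bss\in\cW(A^T)\subset\Z^d$, hence $A^T\bxi_0 = 2\pi(A^T\bk + A^T\bss) \in 2\pi\Z^d$; and once we are on $2\pi\Z^d$, all further images $(A^T)^i$ of it remain in $2\pi\Z^d$ since $A^T$ has integer entries. Second, apply \eqref{Mu=1} from Lemma~\ref{Lemma_MuBounded}: $\mu$ equals $1$ at every point of $2\pi\Z^d$. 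Therefore $\mu((A^T)^i\bxi_0) = 1$ for every $i\ge 1$, so the correction product is identically $1$ and
$$
  M\left((A^T)^j\bxi_0\right) = M(\bxi_0) = M\bigl(2\pi(\bk+\bss)\bigr),\qquad j=1,2,\dots.
$$
Third, choose $j$ large enough that $|(A^T)^j\bxi_0| > r$ — possible by \eqref{DilationMatrixProperty} — and set $\bxi := (A^T)^j\bxi_0$. Fourth, confirm positivity: $M(\bxi_0) = \prod_{i=0}^\infty \mu((A^{-T})^i\bxi_0) > 0$ because each factor $\mu$ is strictly positive (Lemma~\ref{Lemma_MuBounded} gives $\mu\ge\cA>0$) and the product converges by Theorem~\ref{Theorem_OnMContinuity}; hence $M(\bxi) = M(\bxi_0) > 0$.

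\textbf{Main obstacle.} There is essentially no analytic obstacle here once the lattice bookkeeping is right — the whole argument hinges on the purely arithmetic fact that $A^T$ preserves $2\pi\Z^d$ and that $\mu\equiv 1$ on $2\pi\Z^d$, so the $\mu$-factors telescope away trivially. The one point requiring care is the very first step: justifying that $2\pi(\bk+\bss)$ lands in $2\pi\Z^d$ under a single application of $A^T$, i.e. correctly invoking $A^T\cS(A^T)=\cW(A^T)\subset\Z^d$ and the fact that $\bk+\bss \neq \bs 0$ so that \eqref{DilationMatrixProperty} applies. After that, everything is immediate from Lemma~\ref{Lemma_MuBounded} and Theorem~\ref{Theorem_OnMContinuity}.
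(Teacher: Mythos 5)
Your proof is correct and takes essentially the same route as the paper: push $2\pi(\bk+\bss)$ out by powers of $A^T$, use $A^T\bss\in\cW(A^T)\subset\Z^d$ and $\mu\equiv 1$ on $2\pi\Z^d$ (Lemma~\ref{Lemma_MuBounded}) so the extra factors are all $1$, and invoke~\eqref{DilationMatrixProperty} to exceed $r$. Only a harmless algebraic slip: from $M(\bxi)=\mu(\bxi)M\left(A^{-T}\bxi\right)$ the recursion reads $M\left((A^T)^j\bxi\right)=M(\bxi)\prod_{i=1}^{j}\mu\left((A^T)^i\bxi\right)$ without the inverses, but since those factors equal $1$ along your sequence the conclusion is unchanged.
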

\begin{proof}
For some $J\in\N$, $\bk\in\Z^d$, and $\bs s\in\cS(A^T)\setminus\{\bs 0\}$ ,
consider the function ${M}(2\pi\cdot)$ at the point $\left(A^T\right)^{J}(\bk+\bs s)$.
Using~\eqref{Mu=1}, we have
\begin{multline*}
  {M}\left(2\pi\left(A^T\right)^{J}(\bk+\bs s)\right)
    =\prod_{j=0}^{\infty}\mu\left(2\pi \left(A^{T}\right)^{J-j}(\bk+\bs s)\right)\\
    =\prod_{j=0}^{J-1}\mu\left(2\pi \left(A^{T}\right)^{J-j}(\bk+\bs s)\right)
        \prod_{j=J}^{\infty}\mu\left(2\pi \left(A^{T}\right)^{J-j}(\bk+\bs s)\right)\\
    =\prod_{j=0}^{\infty}\mu\left(2\pi \left(A^{T}\right)^{-j}(\bk+\bs s)\right)
    ={M}\left(2\pi (\bk+\bs s)\right).
\end{multline*}
By~\eqref{DilationMatrixProperty},
for an arbitrary large $r>\left|2\pi(\bk+\bss)\right|$, there exists a number $J\in\N$ such that
$2\pi\left|\left(A^T\right)^{J}(\bk+\bss)\right|>r$ and
${M}\left(2\pi\left(A^T\right)^{J}(\bk+\bss)\right)
={M}\left(2\pi (\bk+\bss)\right)>0$.
\end{proof}

Below we present a corollary of Proposition~\ref{PropositionMyDigit}.

\begin{corollary}\label{Corollary_OmegaSet}
Let $A$ be a $d\times d$ dilation matrix. Consider the set
\begin{equation*}
  \Omega_{\bk,\bs s}:=\set{A^j(\bs k+\bs s)}{j\in\N},\qquad \bs k\in\Z^d,\ \ \bs s\in{\cS}(A)
     \setminus\{\bs 0\};
\end{equation*}
then we have
$\Z^d\setminus\{\bs 0\}=\bigcup_{
\begin{subarray}{c}
          \bk\in\Z^d\\[0.2ex]\bs s\in{\cS}(A)\setminus\{\bs 0\}\end{subarray}}\Omega_{\bk,\bs s}$ and
$\Omega_{\bk,\bs s}\cap\Omega_{\bk',\bs s'}=\emptyset$ if $\bs \bk\ne\bk'$ or $\bs s\ne\bs s'$.
\end{corollary}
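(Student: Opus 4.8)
I would read Corollary~\ref{Corollary_OmegaSet} as nothing more than a regrouping of Proposition~\ref{PropositionMyDigit} according to the free parameter $\bk$. First I would record the elementary identity
$$
  \bigcup_{\bk\in\Z^d}\Omega_{\bk,\bss}=\bigcup_{j\in\N}A^{j}\set{\bk+\bss}{\bk\in\Z^d},
    \qquad \bss\in\cS(A)\setminus\{\bs 0\},
$$
which follows directly from the definition of $\Omega_{\bk,\bss}$ and the (left-)distributivity of the matrix-vector product over sets adopted in the Preliminaries. Taking the union over $\bss\in\cS(A)\setminus\{\bs 0\}$ and invoking the first identity of Proposition~\ref{PropositionMyDigit} then gives $\bigcup_{\bk,\bss}\Omega_{\bk,\bss}=\Z^d\setminus\{\bs 0\}$. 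Along the way one also checks the trivial point that $\Omega_{\bk,\bss}\subseteq\Z^d\setminus\{\bs 0\}$: since $\bss\in[0,1)^d$ and $\bss\ne\bs 0$, we have $\bk+\bss\ne\bs 0$ for every $\bk\in\Z^d$, hence $A^{j}(\bk+\bss)\ne\bs 0$ because $A$ is non-singular.

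For the disjointness, suppose a point $\bn$ lies in $\Omega_{\bk,\bss}\cap\Omega_{\bk',\bss'}$, and write $\bn=A^{j}(\bk+\bss)=A^{j'}(\bk'+\bss')$ with $j,j'\in\N$. Then $\bn$ is a common element of $A^{j}\set{\bk+\bss}{\bk\in\Z^d}$ and $A^{j'}\set{\bk+\bss'}{\bk\in\Z^d}$ (the sets appearing in Proposition~\ref{PropositionMyDigit}), so the second (disjointness) assertion of Proposition~\ref{PropositionMyDigit} forces $j=j'$ and $\bss=\bss'$. Cancelling $A^{j}$ in $A^{j}(\bk+\bss)=A^{j}(\bk'+\bss)$, again using that $A$ is non-singular, yields $\bk=\bk'$. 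Thus $\Omega_{\bk,\bss}\cap\Omega_{\bk',\bss'}\ne\emptyset$ implies $(\bk,\bss)=(\bk',\bss')$, which is exactly the contrapositive of the claimed disjointness.

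I do not expect any real obstacle here: the whole argument is bookkeeping built on top of Proposition~\ref{PropositionMyDigit}. The one spot that deserves a moment's care is the range of the index $j$ in the definition of $\Omega_{\bk,\bss}$: it must be the same range as in Proposition~\ref{PropositionMyDigit}, namely $j\ge 1$, for otherwise the non-integer vector $\bk+\bss$ (the value at $j=0$) would be included in $\Omega_{\bk,\bss}$ and the covering statement $\bigcup_{\bk,\bss}\Omega_{\bk,\bss}=\Z^d\setminus\{\bs 0\}$ would fail.
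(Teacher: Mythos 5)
Your argument is correct and is precisely the derivation the paper intends: the corollary is stated as an immediate consequence of Proposition~\ref{PropositionMyDigit} (the paper leaves the proof to the reader), and your regrouping of that proposition's union over $j$ and $\bss$ into the sets $\Omega_{\bk,\bss}$, plus the cancellation of $A^{j}$ using non-singularity, is exactly the expected bookkeeping. Your closing remark that $j$ must range over $j\ge 1$, as in Proposition~\ref{PropositionMyDigit}, is a sensible clarification and does not conflict with the statement.
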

The proof is left to the reader.

\begin{remark}
By Corollary~\ref{Corollary_OmegaSet},
Theorem~\ref{Theorem_UnvanishingOfMu} guarantees
that
the function ${M}(\bxi)$ does not decay at all (including infinitely distant) points of the lattice
$2\pi\Z^d$.
Whereas $G(\bxi)$ vanishes at the same points $2\pi\bk$, $\bk\in\Z^d$. Consequently
we suppose that $\hat\phi(\bxi)$ can decay faster than
$1/|\bxi|^{2}$ as $|\bxi|\to\infty$.
In any case, Theorem~\ref{Theorem_UnvanishingOfMu}
states that the function $\cF^{-1}M$ can be interpreted only as a
distribution.
\end{remark}


In the next subsection, some corollaries of the properties of the function $M$ are presented.

\subsection{Corollaries}

Since any elliptic scaling function is the convolution
of an elliptic spline with a distribution; we have obvious corollaries of the propositions
of the previous subsection.
\begin{corollary}
If a mask $m_0$ is given by formula~\eqref{m_0Definition}, where
$G(\bxi)$ is given by~\eqref{m_1}; then $\hat\phi(\bxi)$ is
continuous on $\R^d$.
\end{corollary}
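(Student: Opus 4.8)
The plan is to read off continuity directly from the explicit formula~\eqref{HatPhi1}, namely $\hat\phi(\bxi)=\dfrac{G(\bxi)}{P(\bxi)}M(\bxi)$, treating the origin separately from the rest of $\R^d$. First I would dispose of $\R^d\setminus\{\bs 0\}$: there $G$ is a trigonometric polynomial, hence continuous; the positive definite quadratic form $P$ from~\eqref{QuadraticForm} is continuous and, being positive definite, vanishes only at $\bs 0$, so $1/P(\bxi)$ is continuous away from the origin; and $M(\bxi)$ is continuous on all of $\R^d$ by Theorem~\ref{Theorem_OnMContinuity}. A product of three continuous functions is continuous, so $\hat\phi$ is continuous on $\R^d\setminus\{\bs 0\}$. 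Note that the zeros of $G$ on $2\pi\Z^d\setminus\{\bs 0\}$, established in Lemma~\ref{LemmaZerosG}, cause no trouble here, since $P$ does not vanish at those points and so $G/P$ is merely zero there.

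It then remains to check continuity at the origin, which amounts to showing that $\hat\phi$ has a removable singularity there. By~\eqref{m_1SimP} the Taylor expansion of $G$ about zero begins with $P$, so $G(\bxi)=P(\bxi)+O(|\bxi|^4)$ near $\bs 0$; since $P$ is positive definite there are constants $0<c\le C$ with $c|\bxi|^2\le P(\bxi)\le C|\bxi|^2$ near $\bs 0$, whence $G(\bxi)/P(\bxi)=1+O(|\bxi|^2)\to 1$ as $\bxi\to\bs 0$. For the remaining factor, formula~\eqref{CalF} together with~\eqref{Mu=1} gives $M(\bs 0)=\prod_{j=0}^\infty\mu(\bs 0)=1$, and $M$ is continuous at $\bs 0$ by Theorem~\ref{Theorem_OnMContinuity}, so $M(\bxi)\to 1$. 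Hence $\hat\phi(\bxi)\to 1$ as $\bxi\to\bs 0$; defining $\hat\phi(\bs 0):=1$ makes $\hat\phi$ continuous at the origin as well, and therefore on all of $\R^d$.

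I do not expect a genuine obstacle here: the only delicate point is the apparent singularity of $1/P$ at $\bs 0$, and it is neutralized by the normalization~\eqref{m_1SimP} that was built into the construction of $G$, combined with the boundedness and continuity of $M$ already established in Theorem~\ref{Theorem_OnMContinuity}. In keeping with the heading of the subsection, one could also phrase this as the statement that the elliptic spline with Fourier transform $G(\bxi)/P(\bxi)$ has, after removal of the singularity at $\bs 0$, a bounded continuous transform, and that convolving with the distribution $\cF^{-1}M$ preserves continuity of the transform; but the direct estimate above is the cleanest route and I would present it that way.
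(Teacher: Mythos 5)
Your argument is correct and is essentially the detailed version of what the paper intends: the paper treats this as an ``obvious'' corollary of the explicit formula~\eqref{HatPhi1} together with the continuity of $M$ from Theorem~\ref{Theorem_OnMContinuity}, with the only delicate point being the removable singularity of $G/P$ at the origin, which you handle exactly as the construction~\eqref{m_1SimP} anticipates (and note that $\hat\phi(\bs 0)=1$ is already forced by $m_0(\bs 0)=1$ in~\eqref{HatPhiInfiniteProduct}, consistent with your definition). No gap; your write-up simply makes the paper's implicit reasoning explicit.
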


\begin{corollary}
Under the conditions of Theorem~\ref{Theorem_CalMuEstimations},
using formulas~\eqref{HatPhi1},~\eqref{HatPhiDecay}, we have
the following estimate
\begin{equation}\label{HatPhiDecayAgain}
  \left|\hat\phi(\bxi)\right| \le C\left(1+|\bxi|\right)^{d\log_{q}{\cB}-2},\qquad\forall\bxi\in\R^d,
\end{equation}
where q:=$|\det A|$ and $\cB$ is given by~\eqref{q}.
\end{corollary}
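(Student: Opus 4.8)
The plan is to combine the two facts already in hand: the explicit form~\eqref{HatPhi1} of $\hat\phi$, namely $\hat\phi(\bxi)=\dfrac{G(\bxi)}{P(\bxi)}M(\bxi)$, and the polynomial growth bound~\eqref{HatPhiDecay} on $M$ established in Theorem~\ref{Theorem_CalMuEstimations}. The only additional ingredient needed is an upper estimate for $|G(\bxi)/P(\bxi)|$ that is uniform on $\R^d$ and decays like $|\bxi|^{-2}$ at infinity.

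First I would bound the numerator. Since $G$ is the trigonometric polynomial~\eqref{m_1}, it is bounded on all of $\R^d$ by some constant, say $|G(\bxi)|\le C_G$ for all $\bxi$ (indeed $G$ is $2\pi$-periodic and continuous). Next I would bound the reciprocal of the denominator away from the origin: $P$ is a positive definite quadratic form by~\eqref{QuadraticForm}, so there is a constant $c>0$ with $P(\bxi)\ge c|\bxi|^2$ for all $\bxi$, whence $1/P(\bxi)\le C_P/|\bxi|^2$ for $|\bxi|\ge1$. Combining these, $|G(\bxi)/P(\bxi)|\le C_G C_P/|\bxi|^2$ for $|\bxi|\ge1$. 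Near the origin one must be slightly more careful, because $P(\bxi)\to0$; but by~\eqref{m_1SimP} the Taylor expansion of $G$ about zero begins with $P(\bxi)$, so $G(\bxi)/P(\bxi)\to1$ as $\bxi\to\bs0$, and by continuity $G/P$ is bounded on the unit ball. Since $M$ is continuous (Theorem~\ref{Theorem_OnMContinuity}) hence bounded on the unit ball as well, $\hat\phi$ is bounded there, so $|\hat\phi(\bxi)|\le C$ for $|\bxi|\le1$, which is dominated by $C(1+|\bxi|)^{d\log_q\cB-2}$ up to adjusting the constant (note $d\log_q\cB\ge0$ since $\cB\ge1$ and $q>1$).

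Finally I would put the pieces together for $|\bxi|\ge1$: using~\eqref{HatPhi1} and~\eqref{HatPhiDecay},
\[
  |\hat\phi(\bxi)|=\frac{|G(\bxi)|}{P(\bxi)}\,M(\bxi)
    \le \frac{C_G C_P}{|\bxi|^2}\cdot C(1+|\bxi|)^{d\log_q\cB}
    \le C'(1+|\bxi|)^{d\log_q\cB-2},
\]
where in the last step I absorb the passage from $|\bxi|$ to $1+|\bxi|$ into the constant (for $|\bxi|\ge1$ these are comparable). Merging with the bound on the unit ball gives~\eqref{HatPhiDecayAgain} for all $\bxi\in\R^d$.

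There is no real obstacle here; the statement is essentially immediate from the previously established ingredients. The only point requiring a line of care is the behaviour near $\bxi=\bs0$, where naively $1/P(\bxi)$ blows up: one must invoke~\eqref{m_1SimP} (the matching leading term of $G$) to see that $\hat\phi$ in fact extends continuously through the origin, as already noted in the preceding corollary on continuity of $\hat\phi$. Everything else is a routine comparison of a bounded trigonometric polynomial against a positive definite quadratic form.
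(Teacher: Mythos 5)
Your argument is correct and is exactly the reasoning the paper intends: the corollary is stated as an immediate consequence of~\eqref{HatPhi1} and~\eqref{HatPhiDecay}, with the boundedness of the trigonometric polynomial $G$, the bound $P(\bxi)\ge c|\bxi|^2$, and the cancellation of the zero of $P$ at the origin via~\eqref{m_1SimP} left implicit. Your write-up simply supplies these routine details (including the behaviour on the unit ball), so there is nothing to add.
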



\section{Elliptic Scaling Function of an Arbitrary Order}\label{Section_ArbitraryOrderFunctions}

An elliptic scaling function of arbitrary order $m>1$ can be defined (in the Fourier domain) by the typical
for the B-splines manner
\begin{equation}\label{HatPhiM}
  \hat\phi^m(\bxi):=\left(\hat\phi(\bxi)\right)^m
    =\frac{\left({G}(\bxi)\right)^m}{\left(P(\bxi)\right)^m}\left({M}(\bxi)\right)^m,
\end{equation}
where ${G}(\bxi)$, ${M}(\bxi)$, and $P(\bxi)$ correspond to the function $\phi$,
see~\eqref{HatPhi1}.
Moreover, for function $\phi^m$,
the mask is $\left(m_0(\bxi)\right)^m$,
where $m_0(\bxi)$ is the mask corresponding to $\phi$.
In other words, the elliptic scaling function of the $m$th, $m>1$, order is defined
as follows
\begin{equation}\label{PhiM}
  \phi^m:=\phi\ast\phi^{m-1},  \qquad\mbox{where}\quad \phi^1:=\phi.
\end{equation}


\subsection{Properties of $\phi^m$}

Now we consider some customary properties of the functions $\phi^m$.
Note that these properties are similar to the properties of
the univariate
cardinal B-splines.

\subsubsection{Compact support}

By definition~\eqref{PhiM}, and since any elliptic scaling function of the first order
is compactly supported; the elliptic scaling function $\phi^m$, $m\ge1$, has a compact support.

\subsubsection{Riesz bases formation}

Here we shall use ideas of the proof from the paper~\cite{VilleBluUnser}.

As is well known, to determine the Riesz bounds, we can
consider the function
\begin{equation}\label{RieszSeries}
  \sum_{\bk\in\Z^d}\left|\hat\phi^m(\bxi+2\pi\bk)\right|^2.
\end{equation}
Note that, using~\eqref{HatPhiDecayAgain} and~\eqref{HatPhiM}, we
have $\left|\hat\phi^m(\bxi)\right| \le
C\left(1+|\bxi|\right)^{m\left(d\log_q{\cB}-2\right)}$,
$\forall\bxi\in\R^d$, where $q:=|\det A|$ and $\cB$ is given
by~\eqref{q}.
If $m\left(d\log_{|\det
A|}{\cB}-2\right)<-d/2$, i.\,e., if
\begin{equation}\label{RieszBasisCondition}
  {\cB}<|\det A|^{\frac{2}{d}-\frac{1}{2m}},
\end{equation}
then series~\eqref{RieszSeries} is convergent
and an upper Riesz bound obviously exists. Note that, increasing the order $m$,
estimation~\eqref{RieszBasisCondition} cannot be made weaker than $\cB<|\det A|^\frac{2}{d}$.

On the other hand, the existence of a lower bound follows from the fact
that $\left|\hat\phi^m(\bxi)\right|^2$ does not vanish in
$[-\pi,\pi]^d$ (because ${G}(\bxi)/P(\bxi)$ does not vanish in
$[-\pi,\pi]^d$).

\begin{remark}
Note that condition~\eqref{RieszBasisCondition} is also
a sufficient condition on $\phi^m$ to belong to $L^2(\R^d)$.
\end{remark}

In fact, $\cB$ is determined completely by the dilation matrix
(by the similarity transformation matrix $Q$).
Note also that, since the brute force estimation is very non-optimal, we suppose
that actually the restrictions on $\cB$
can be essentially weakened. This will be discussed elsewhere.

\subsubsection{Partition of unity}

By Lemma~\ref{LemmaZerosG},
the partition of unity by integer shifts of the scaling function $\phi^m$
is obvious.

\subsubsection{Polynomials representation}

From Lemma~\ref{LemmaZerosG} and formulas~\eqref{HatPhi1},~\eqref{HatPhiM},
we see that $\phi^m$ satisfies the Strang--Fix conditions of $2m-1$ order:
$D^{n}\hat\phi^m(2\pi\bk)=0$, $\forall\bk\in\Z^d\setminus\{\bs 0\}$, $\forall n\in\Z^d_{\ge0}$, $0\le|n|\le 2m-1$.
Thus the span of integer shifts of any elliptic scaling function $\phi^m$ contains {\em all} the polynomials
of total degree $\le 2m-1$.

Nevertheless
the elliptic scaling functions supply an example of the functions that
generalize the classical Strang-Fix conditions and can represent polynomials
of higher degree.

In more detail,
in the paper~\cite{DahmenMicchelli}, W.\;Dahmen and Ch.\;Micchelli introduced a space of polynomials
\begin{equation}\label{SDefinition}
  {\cV}:=\set{p}{p\in\Pi,\,\left(p(D)\hat f\right)(2\pi \bk)=0,\ \bk\in\Z^d\setminus\{0\}},
\end{equation}
where $\Pi$ is the space of {\em all} polynomials on $\R^d$,
$p(D)$ is the differential operator induced by $p$,
and $f$ is a compactly supported function (that, for example, belongs to $S'(\R^d)$).
In the paper~\cite{DahmenMicchelli}, see Proposition~2.1.,
it has been proved that if $f$ satisfies conditions~\eqref{SDefinition} and $\hat f(\bs 0)\ne0$,
then the span of integer shifts of $f$ contains
the {\em affinely-invariant}, i.\,e., scale- and shift-invariant, subspace $\cV_{\rm aff}$
of the space $\cV$.
Namely,
$$
  \sum_{\bk\in\Z^d}p(\bk)f(\bx-\bk) = \hat f(\bs 0)p(\bx)+p^*(\bx),\quad\mbox{where }\ p\in\cV_{\rm aff}\subset\cV,\ \
      \deg p^*<\deg p.
$$
However, in the paper~\cite{deBoor} of
C.~de~Boor, a generalization on not scale-invariant (only shift-invariant) polynomial
spaces was considered.

The well-known functions that define nontrivial spaces $\cV$ (i.\,e., $\cV\subsetneqq\Pi_N$,
where $N$ is the order of the Strang--Fix conditions) are
box-splines.
The elliptic scaling functions supply another example of the functions that define
nontrivial spaces $\cV$.
Below we
state the theorem.
\begin{theorem}~\label{Theorem_NullSpaceRepresentation}
  Let an elliptic scaling function $\phi^m$ be compactly supported and given by~\eqref{HatPhiM},
where $\hat\phi$ is of the form~\eqref{HatPhi1}. Then algebraic polynomials
represented by $\phi^m$ belong to the null-space of the elliptic differential operator
whose symbol is $\left(P(\bxi)\right)^m$.
\end{theorem}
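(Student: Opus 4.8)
The plan is to use the characterization of the polynomial space $\cV$ via the zeros (with multiplicity) of $\hat\phi^m$ on $2\pi\Z^d\setminus\{\bs 0\}$, together with the affine invariance result of Dahmen--Micchelli, and then identify the relevant polynomial space with the null-space of the operator with symbol $(P(\bxi))^m$. First I would recall from~\eqref{HatPhiM} that $\hat\phi^m(\bxi)=\bigl(G(\bxi)/P(\bxi)\bigr)^m\bigl(M(\bxi)\bigr)^m$, and that by Lemma~\ref{LemmaZerosG} the trigonometric polynomial $G$ vanishes exactly on $2\pi\Z^d$, with the same order of vanishing (namely $2$) as the quadratic form $P$ at the origin; by Theorem~\ref{Theorem_OnMContinuity} the factor $M$ is continuous and, near any point of $2\pi\Z^d$, nonzero (this is where Theorem~\ref{Theorem_UnvanishingOfMu} is used). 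Hence, near an arbitrary point $2\pi\bk$ with $\bk\ne\bs 0$, the function $\hat\phi^m$ behaves like $(G(\bxi)/P(2\pi\bk))^m$ times a smooth nonvanishing factor, so its order of vanishing at $2\pi\bk$ is controlled entirely by the local behaviour of $G^m$.

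Next I would translate the membership condition ``$p\in\cV$'', i.\,e. $\bigl(p(D)\hat\phi^m\bigr)(2\pi\bk)=0$ for all $\bk\in\Z^d\setminus\{\bs 0\}$, into a statement about $G$. Writing $\hat\phi^m = (G)^m\cdot\Psi$ with $\Psi(\bxi):=\bigl(M(\bxi)/P(\bxi)\bigr)^m$ smooth and nonvanishing in a neighbourhood of each $2\pi\bk$, $\bk\ne\bs 0$, the Leibniz rule shows that $p(D)\bigl(G^m\Psi\bigr)$ vanishes at $2\pi\bk$ for every admissible $\bk$ if and only if $p(D)\bigl(G^m\bigr)$ vanishes at every such point (the nonvanishing smooth factor can be divided out by an induction on $|n|$ exactly as in the standard Strang--Fix/Dahmen--Micchelli argument). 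Since $G$ is $2\pi$-periodic, this is equivalent to $p(D)(G^m)$ vanishing at $\bs 0$; and because the Taylor expansion of $G$ about $\bs 0$ starts with $P$, i.\,e. $G(\bxi)=P(\bxi)+O(|\bxi|^4)$, the lowest-order part of $G^m$ at the origin is exactly $(P(\bxi))^m$. A degree/graded argument then gives that $p(D)(G^m)$ vanishes at the origin precisely when $p(D)\bigl((P(\bxi))^m\bigr)=0$ identically (one checks that the higher-order terms in $G^m$ cannot be used to cancel the contribution of $(P)^m$ because they are of strictly higher degree, so their derivatives of the relevant order vanish at $\bs 0$). Thus $\cV=\set{p\in\Pi}{p(D)\bigl((P(\bxi))^m\bigr)=0}$, which is, by definition, the null-space of the differential operator with symbol $(P(\bxi))^m$ — here I use the obvious duality between multiplication by the polynomial $(P)^m$ and the constant-coefficient operator $p(D)$.

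Finally, invoking Proposition~2.1 of~\cite{DahmenMicchelli} with $f=\phi^m$ (legitimate since $\phi^m$ is compactly supported and, by Lemma~\ref{LemmaZerosG} together with the nonvanishing of $G/P$ and of $M$ at the origin, $\hat\phi^m(\bs 0)\ne0$), the span of integer shifts of $\phi^m$ reproduces — up to lower-degree corrections — every polynomial in the affinely invariant subspace $\cV_{\rm aff}$ of $\cV$; since $P$ is invariant up to a constant under $A^{-T}$ by~\eqref{QuadrFormInvariance}, the space $\cV$ is itself scale-invariant, so $\cV_{\rm aff}=\cV$. Consequently every polynomial reproduced by $\phi^m$ lies in $\cV$, which we have identified with the null-space of the operator with symbol $(P(\bxi))^m$, as claimed.

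The main obstacle is the divide-out step in the second paragraph: one must argue carefully that the presence of the smooth nonvanishing factor $\Psi$ (and the higher-order terms in $G^m$) does not enlarge $\cV$ beyond the kernel of $p(D)\mapsto p(D)((P)^m)$. The cleanest route is a graded induction on the total degree of $p$, reducing at each step to the homogeneous leading part and using that $(P)^m$ is the unique lowest-degree ($=2m$) homogeneous component of $G^m$; the nonvanishing of $M$ at every point of $2\pi\Z^d$, guaranteed by Theorem~\ref{Theorem_UnvanishingOfMu}, is exactly what makes this reduction valid simultaneously at all lattice points and not just at the origin.
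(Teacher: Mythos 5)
Your reduction of $\cV$ to the space $\set{p\in\Pi}{p(D)\bigl((P(\bxi))^m\bigr)=0}$, and the claim that this space ``is, by definition, the null-space of the differential operator with symbol $(P(\bxi))^m$'', contain the decisive error: the null-space in the theorem is $\set{p\in\Pi}{(P(D))^m p=0}$, and the duality you invoke transposes which polynomial acts as an operator, so the two spaces are genuinely different. Concretely, $p\equiv1$ lies in the null-space of $(P(D))^m$ (and is certainly reproduced, by partition of unity), yet $1(D)\bigl(P^m\bigr)=P^m\ne0$; conversely $p(\bx)=x_1^{2m+1}$ satisfies $p(D)\bigl(P^m\bigr)=0$ but $(P(D))^m p=q_{11}^m(2m+1)!\,x_1\ne0$. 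The intermediate ``iff''s are also false for an individual polynomial: dividing out the nonvanishing factor $\Psi$ and discarding the higher-order Taylor terms of $G^m$ is legitimate only for a family of conditions closed under differentiation (a $D$-invariant, i.e.\ shift-invariant, polynomial space), which is exactly the structure of the Strang--Fix/Dahmen--Micchelli induction you allude to and then drop. For instance $p(\bx)=x_1$ gives $p(D)(G^m)(\bs 0)=0$ while $p(D)\bigl(P^m\bigr)=D_1(P^m)\not\equiv0$, so your stated equivalence fails already in degree one. A further (repairable) gap: you assume $\Psi=(M/P)^m$ is smooth near $2\pi\bk$, but the paper proves only continuity of $M$; what one should use instead is that $\hat\phi^m$ is entire because $\phi^m$ is compactly supported, so continuity and strict positivity of $M$ (which follow from Lemma~\ref{Lemma_MuBounded} and Theorem~\ref{Theorem_OnMContinuity}; Theorem~\ref{Theorem_UnvanishingOfMu} concerns the non-lattice points $2\pi(\bk+\bss)$) identify the $2m$-jet of $\hat\phi^m$ at each $2\pi\bk\ne\bs 0$ as a nonzero multiple of $P(\cdot-2\pi\bk)^m$.

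The logic of your final step also runs in the wrong direction. Proposition~2.1 of~\cite{DahmenMicchelli} says that every polynomial of $\cV_{\rm aff}$ \emph{is} reproduced; the theorem requires the opposite inclusion, namely that every polynomial contained in the span of the integer shifts is annihilated by the operator with symbol $(P(\bxi))^m$. For that one needs de~Boor's characterization~\cite{deBoor} (cited in the paper for precisely this purpose) that the polynomials in the span form a shift-invariant ($D$-invariant) subspace of $\cV$, and then an argument that any such subspace --- which, via refinability and the invariance~\eqref{QuadrFormInvariance} of $P$ under $A^T$, can be taken scale-invariant as well --- lies in the kernel of $(P(D))^m$, using apolarity for $D$-invariant families rather than the pointwise duality above. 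Note that the paper itself omits the proof and refers to~\cite{Z_ConstrApprox_NDStrFix}, emphasizing that the argument there rests on the invariance of $P$ under $A^T$ and does not require the explicit form of $\hat\phi$; as written, your proposal does not establish the stated inclusion.
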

The proof is omitted.
And, to prove the theorem, we refer the reader to Ref.~\cite{Z_ConstrApprox_NDStrFix}, where the proof is based
essentially on the invariance of the quadratic form $P(\bxi)$ under transformation by
the dilation matrix $A^T$; and the proof can be performed even if the explicit form of the Fourier
transform of the scaling function is not known.

Note that, in the case of the elliptic scaling functions, the shift-invariant subspace of
the space $\cV$ is also scale-invariant.
Note also that $\cV$ corresponding to $\phi^m$ contains polynomials of degree $\le 2m+1$.

\subsubsection{Order of approximation}

Let a function $f(\bx)$ and its derivatives up to order $2m$ belong to $L^2(\R^d)$.
Then the function $\phi^m$ supplies the $(2m)$-order of approximation if
$$
  \inf_{c(\bk)}\left|f(\bx)-\sum_{\bk\in\Z^d}c(\bk)\phi^m(\bx/h-\bk)\right|_{L^2}\le Ch^{2m},
$$
where the constant $C$ does not depend on $h$,
see Refs.~\cite{BluUnser,BoorDevoreRon,VilleBluUnser}.

By Lemma~\ref{LemmaZerosG} and formulas~\eqref{HatPhi1},~\eqref{HatPhiM}, we have
$$
  \forall\bk\in\Z^d\setminus\{\bs 0\}:\quad\hat\phi^m(\bxi+2\pi\bk) = O\left(|\bxi|^{2m}\right)
  \quad\mbox{as }\bxi\to\bs 0;
$$
that implies the $(2m)$-order approximation of $\phi^m$.

\subsubsection{Convolution relation}

The convolution relation $\phi^{m_1+m_2}=\phi^{m_1}\ast\phi^{m_2}$ directly follows
from definitions~\eqref{HatPhiM},~\eqref{PhiM}.

\subsubsection{Total positivity}

Let us recall that if a function $f$ is totally positive, then the function satisfies the following
conditions
$$
  \forall \bx_1,\dots,\bx_k\in\R^d,\ \ \forall \xi_1,\dots, \xi_k\in\C,\quad
    \sum_{i,j=1}^k \xi_i\cc \xi_j f(\bx_i-\bx_j)\ge0.
$$
It is known, see for example Ref.~\cite{GelfandVilenkin}, that the previous conditions are equivalent to the
condition: $\hat f(\bxi)\ge0$, $\forall \bxi\in\R^d$.

From Lemmas~\ref{LemmaZerosG},~\ref{Lemma_MuBounded}, it follows that $\hat\phi^m(\bxi)\ge0$,
$\forall\bxi\in\R^d$. So
any elliptic scaling function $\phi^m$ is totally positive.

In the next subsection, we consider some differential properties of the elliptic scaling functions.

\subsection{Differential properties}

It is well known that the B-splines (univariate and multivariate polyharmonic)
satisfy some differential (and integral) relations. Moreover, the B-splines
are built by some simple (differentiable) functions;
in the univariate B-splines case, it is algebraic polynomials.
For the elliptic scaling functions, the situation is more complicated.
Nevertheless, for any elliptic scaling function, we can introduce a (not necessarily differential)
operator such that the scaling function will satisfy some (similar to differential) relations with
this operator.

In detail, consider an elliptic scaling function $\phi^m$, see~\eqref{HatPhiM}, and
define the symbol of an operator as
\begin{equation}\label{SymbolMyOperator}
  \widehat{\Delta^\sharp}(\bxi):=\dfrac{P(\bxi)}{{M}(\bxi)}.
\end{equation}

Now we can present some relations with this operator. But before we must introduce
notation.

Let $\cD_j^2$, $j=1,\dots,d$, be a finite-difference operator such that, for any function $f(\bx)$, $\bx\in\R^d$,
we have $\cD_j^2f:=f(\cdot-\bs e_j)-2f+f(\cdot+\bs e_j)$; and let the operator $\cD_{ij}:=\cD_i\cD_j$,
$i\ne j$, be defined as
$\cD_{ij}f:=\dfrac{1}{4}(f(\cdot-\bs e_i-\bs e_j)$ $+f(\cdot+\bs e_i+\bs e_j)-f(\cdot-\bs e_i+\bs e_j)
-f(\cdot+\bs e_i-\bs e_j))$.

\begin{remark}
Note that,
since $\cD^2_j$ and $\cD_{ij}$ are the iterations of the different operators: $f\left(\cdot-\frac12\bs e_j\right)
-f\left(\cdot+\frac12\bs e_j\right)$ and $\dfrac12\left(f(\cdot-\bs e_j)-f(\cdot+\bs e_j)\right)$,
respectively; it follows
that, if $i=j$, then the operator $\cD_{ij}$ does not change into the operator $\cD^2_j$.
\end{remark}

\begin{theorem}
Let an elliptic scaling function $\phi^m$ be given
by~\eqref{PhiM}, see also~\eqref{HatPhiM}; and let the symbol of
an operator $\Delta^\sharp$ be given by~\eqref{SymbolMyOperator}.
Then we have
\begin{equation}\label{DeltaSharpKPhiM}
  \left(\Delta^\sharp\right)^k \phi^m= \cG^k \phi^{m-k},\quad k<m,
\end{equation}
where $\cG$ is a finite-difference operator corresponding to trigonometric
polynomial~\eqref{m_1}.
The operator $\cG$ is defined as: $\cG:=-P(\cD_1,\dots,\cD_d)$, where $P$ is
quadratic form~\eqref{AnotherQuadraticForm}.
\end{theorem}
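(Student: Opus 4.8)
The plan is to reduce the claim to a symbol computation in the Fourier domain and then to invert the Fourier transform. First I would identify the Fourier multiplier of the finite-difference operator $\cG=-P(\cD_1,\dots,\cD_d)$, where the factors are the operators specified just before the theorem: $\cD_j^2 f=f(\cdot-\bs e_j)-2f+f(\cdot+\bs e_j)$ and $\cD_{ij}f=\tfrac14\bigl(f(\cdot-\bs e_i-\bs e_j)+f(\cdot+\bs e_i+\bs e_j)-f(\cdot-\bs e_i+\bs e_j)-f(\cdot+\bs e_i-\bs e_j)\bigr)$, $i\ne j$. From $\widehat{f(\cdot-\bs a)}(\bxi)=e^{-i\bxi\cdot\bs a}\hat f(\bxi)$ one gets that $\cD_j^2$ has symbol $e^{-i\xi_j}-2+e^{i\xi_j}=-4\sin^2(\xi_j/2)$ and $\cD_{ij}$ has symbol $\tfrac14\bigl(2\cos(\xi_i+\xi_j)-2\cos(\xi_i-\xi_j)\bigr)=-\sin\xi_i\sin\xi_j$. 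Substituting into $-\sum_{i}q_{ii}\cD_i^2-2\sum_{i<j}q_{ij}\cD_{ij}$ and comparing with~\eqref{m_1} shows that the symbol of $\cG$ is exactly the trigonometric polynomial $G(\bxi)$; this in particular justifies the wording ``finite-difference operator corresponding to~\eqref{m_1}''. Hence $\cG^k$ is the Fourier multiplier with symbol $G(\bxi)^k$, and, since $\phi^{m-k}$ is compactly supported (Section~\ref{Section_ArbitraryOrderFunctions}) and $\cG$ has a finite stencil, $\cG^k\phi^{m-k}$ is a compactly supported distribution with, by~\eqref{HatPhiM}, $\widehat{\cG^k\phi^{m-k}}(\bxi)=G(\bxi)^k\bigl(G(\bxi)/P(\bxi)\bigr)^{m-k}M(\bxi)^{m-k}$.

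Next I would make precise the left-hand side. By~\eqref{SymbolMyOperator} the operator $\Delta^\sharp$ is the Fourier multiplier with symbol $\widehat{\Delta^\sharp}(\bxi)=P(\bxi)/M(\bxi)$. Here I would use that $M$ is continuous (Theorem~\ref{Theorem_OnMContinuity}) and nowhere zero (the product~\eqref{CalF} converges absolutely and each factor $\mu$ is nonvanishing, by Lemma~\ref{Lemma_MuBounded}), so that $\widehat{\Delta^\sharp}$ is a well-defined continuous function, of at most polynomial growth by Theorem~\ref{Theorem_CalMuEstimations}. Since $\hat\phi^m=(\hat\phi)^m$ is continuous and, by~\eqref{HatPhiDecayAgain}, of polynomial growth, the product $\bigl(\widehat{\Delta^\sharp}\bigr)^{k}\hat\phi^m$ is again a continuous function of polynomial growth, hence a tempered distribution; by definition this is the Fourier transform of $(\Delta^\sharp)^k\phi^m$.

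It then remains only to compare the two symbols. For $\bxi\ne\bs0$ we have $P(\bxi)\ne0$, and~\eqref{HatPhiM} gives directly
\begin{align*}
  \left(\frac{P(\bxi)}{M(\bxi)}\right)^{k}\hat\phi^m(\bxi)
   &=\frac{P(\bxi)^k}{M(\bxi)^k}\cdot\frac{G(\bxi)^m}{P(\bxi)^m}\,M(\bxi)^m
    =\frac{G(\bxi)^m}{P(\bxi)^{m-k}}\,M(\bxi)^{m-k}\\
   &=G(\bxi)^k\left(\frac{G(\bxi)}{P(\bxi)}\right)^{m-k}M(\bxi)^{m-k}
    =G(\bxi)^k\,\hat\phi^{m-k}(\bxi),
\end{align*}
where the hypothesis $k<m$ is used so that $m-k\ge1$. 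Both ends are continuous on all of $\R^d$, so the identity extends from $\R^d\setminus\{\bs0\}$ to $\R^d$ by continuity; applying the inverse Fourier transform and using the two symbol identifications above yields $(\Delta^\sharp)^k\phi^m=\cG^k\phi^{m-k}$. The only genuinely delicate point is the one handled in the second paragraph: $\Delta^\sharp$ is \emph{not} a differential operator — its symbol is merely continuous and possibly of positive polynomial growth — so before manipulating symbols one must know that $(\Delta^\sharp)^k\phi^m$ is a bona fide tempered distribution, which is exactly why the non-vanishing of $M$ and the growth bounds of Section~\ref{Section_MaskChoice} are invoked rather than the bare algebra of multipliers.
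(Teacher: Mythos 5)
Your argument is correct and follows essentially the same route as the paper's proof: the same Fourier-domain symbol cancellation $\bigl(P/M\bigr)^k\,\hat\phi^m=G^k\,\hat\phi^{m-k}$ combined with identifying the trigonometric polynomial $G(\bxi)$ of~\eqref{m_1} as the symbol of $\cG=-P(\cD_1,\dots,\cD_d)$ via the exponential (translation) representation of $\cD_j^2$ and $\cD_{ij}$. The only difference is cosmetic: you verify the multiplier identities with explicit tempered-distribution and growth-bound justifications (non-vanishing of $M$, estimate~\eqref{HatPhiDecayAgain}), which the paper leaves implicit.
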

\begin{proof}
Using~\eqref{HatPhiM} and~\eqref{SymbolMyOperator}, the Fourier transform
of the left-hand side of relation~\eqref{DeltaSharpKPhiM}
can be written as
$$
  \left(\dfrac{P(\bxi)}{{M}(\bxi)}\right)^k
     \frac{\left({G}(\bxi)\right)^m}{\left(P(\bxi)\right)^m}\left({M}(\bxi)\right)^m
       = \frac{\left({G}(\bxi)\right)^m}{\left(P(\bxi)\right)^{m-k}}\left({M}(\bxi)\right)^{m-k}
         =\left({G}(\bxi)\right)^k\hat\phi^{m-k}(\bxi).
$$
So, in the $x$-domain, we have $\cG^k\phi^{m-k}$, where $\cG$ is a finite-difference operator corresponding
to trigonometric polynomial~\eqref{m_1}.

Rewrite trigonometric polynomial~\eqref{m_1} in exponent-wise form
\begin{multline*}
  G(\bxi) = -\sum_{1\le i\le d} q_{ii}\left(e^{i\xi_i}+e^{-i\xi_i}-2\right)\\
    -\dfrac12\sum_{\begin{subarray}{c}
          1\le i,j\le d\\[0.4ex]i<j \end{subarray}}q_{ij}\left(e^{i(\xi_i+\xi_j)}+e^{-i(\xi_i+\xi_j)}
             -e^{i(\xi_i-\xi_j)}-e^{-i(\xi_i-\xi_j)} \right).
\end{multline*}
It is obvious that, in the $x$-domain, the previous polynomial corresponds to the
following finite-difference expression
\begin{multline*}
  \cG f=-\sum_{1\le i\le d} q_{ii}\Bigl(f(\cdot-\bs e_j)-2f+f(\cdot+\bs e_j)\Bigr)\\
    -\dfrac12\sum_{\begin{subarray}{c}
          1\le i,j\le d\\[0.4ex]i<j \end{subarray}}q_{ij}\Bigl(f(\cdot-\bs e_i-\bs e_j)
             +f(\cdot+\bs e_i+\bs e_j)-f(\cdot-\bs e_i+\bs e_j)-f(\cdot+\bs e_i-\bs e_j)\Bigr)\\
               =-\sum_{1\le i\le d} q_{ii}\cD_i^2f
    -2\sum_{\begin{subarray}{c}
          1\le i,j\le d\\[0.4ex]i<j \end{subarray}}q_{ij}\cD_{ij}f
            =-P(\cD_1,\dots,\cD_d)f.
\end{multline*}
\end{proof}

\begin{remark}
Note that the finite-difference operator
$\cG:=-P(\cD_1,\dots,\cD_d)$ is
the simplest central-difference approximation of the differential operator
with symbol $P(\bxi)$.
\end{remark}

The Fourier transform of the Green function of the operator
$\left(\Delta^\sharp\right)^m$ (where
$\widehat{\Delta^\sharp}(\bxi)$ is given
by~\eqref{SymbolMyOperator}) is of the form
\begin{equation}\label{HatRho}
  \hat\rho(\bxi):=\left(\dfrac{{M}(\bxi)}{P(\bxi)}\right)^m,\qquad\bxi\in\R^d.
\end{equation}
Indeed,
in the Fourier domain, we have
$$
  \cF\left(\left(\Delta^\sharp\right)^m\rho\right)(\bxi)
    = \left(\widehat{\Delta^\sharp}(\bxi)\right)^m\hat\rho(\bxi) \equiv 1.
$$
Thus, in the $x$-domain, we obtain: $\left(\Delta^\sharp\right)^m\rho(\bx)=\delta(\bx)$.
And we can formulate the following proposition.

\begin{proposition}
Any elliptic scaling function $\phi^m$,
see~\eqref{HatPhiM},~\eqref{PhiM}, is a linear combination of
shifted versions of the Green function whose Fourier transform is
defined by~\eqref{HatRho}. Moreover, we have the relation
$$
  \left(\Delta^\sharp\right)^m \phi^m(\bx) = 0,\quad \forall \bx\in\R^d\setminus\Z^d.
$$
\end{proposition}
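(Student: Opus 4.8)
The plan is to unwind the two claims from the Fourier-side formulas already established. For the first claim, recall from~\eqref{HatPhiM} that $\hat\phi^m(\bxi)=\dfrac{(G(\bxi))^m}{(P(\bxi))^m}(M(\bxi))^m$. Factor this as $\hat\phi^m(\bxi)=(G(\bxi))^m\cdot\left(\dfrac{M(\bxi)}{P(\bxi)}\right)^m=(G(\bxi))^m\,\hat\rho(\bxi)$, using the definition~\eqref{HatRho} of the Green function symbol. Since $G$ is the trigonometric polynomial~\eqref{m_1}, it is a finite linear combination of exponentials $e^{i\bk\cdot\bxi}$ with $\bk\in\Z^d$; hence $(G(\bxi))^m$ is likewise a finite combination $\sum_{\bk}c_{\bk}e^{i\bk\cdot\bxi}$ with $\bk$ ranging over a finite subset of $\Z^d$. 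Taking inverse Fourier transforms, the relation $\hat\phi^m=(G)^m\hat\rho$ becomes $\phi^m(\bx)=\sum_{\bk}c_{\bk}\,\rho(\bx-\bk)$, i.e.\ $\phi^m$ is exactly the finite linear combination of integer shifts of $\rho$ asserted. (Equivalently, $\phi^m=\cG^m\rho$ with $\cG=-P(\cD_1,\dots,\cD_d)$ as in the preceding theorem.)

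For the second claim, apply $(\Delta^\sharp)^m$ to this representation. On the Fourier side, $\cF\big((\Delta^\sharp)^m\phi^m\big)(\bxi)=\big(\widehat{\Delta^\sharp}(\bxi)\big)^m\,\hat\phi^m(\bxi)=\left(\dfrac{P(\bxi)}{M(\bxi)}\right)^m\dfrac{(G(\bxi))^m}{(P(\bxi))^m}(M(\bxi))^m=(G(\bxi))^m$, exactly as in the computation in the proof of the $k<m$ relation~\eqref{DeltaSharpKPhiM} with $k=m$. Thus $(\Delta^\sharp)^m\phi^m=\cG^m\delta$ in the sense of distributions — a finite linear combination of shifted Dirac distributions $\delta(\cdot-\bk)$ with $\bk\in\Z^d$, since $(\Delta^\sharp)^m\rho=\delta$ and $\phi^m=\cG^m\rho$. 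Because each $\delta(\cdot-\bk)$ is supported on the single point $\bk\in\Z^d$, the distribution $(\Delta^\sharp)^m\phi^m$ is supported in $\Z^d$, so it vanishes identically on $\R^d\setminus\Z^d$; in particular $(\Delta^\sharp)^m\phi^m(\bx)=0$ for all $\bx\in\R^d\setminus\Z^d$.

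The main point requiring care is interpreting the operator $\Delta^\sharp$ and the equalities above distributionally rather than pointwise: $M$ does not decay at infinity (Theorem~\ref{Theorem_UnvanishingOfMu}), so $\rho=\cF^{-1}\hat\rho$ and $\Delta^\sharp$ are genuine tempered-distribution objects, and the manipulations $(G)^m\hat\rho$, $(\widehat{\Delta^\sharp})^m\hat\phi^m$ must be read as multiplication of a tempered distribution by a (smooth, polynomially bounded) trigonometric polynomial, which is well defined. Once this is granted, all the cancellations are the same algebraic identities used in~\eqref{DeltaSharpKPhiM} and~\eqref{HatRho}, and no further estimates are needed.
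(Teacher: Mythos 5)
Your argument is correct and matches the paper's own (largely implicit) reasoning: the paper derives $\left(\Delta^\sharp\right)^m\rho=\delta$ from $\bigl(\widehat{\Delta^\sharp}\bigr)^m\hat\rho\equiv1$ and then states the proposition, the point being exactly your factorization $\hat\phi^m=(G)^m\hat\rho$ with $(G)^m$ a trigonometric polynomial, so that $\phi^m$ is a finite combination of integer shifts of $\rho$ and $\left(\Delta^\sharp\right)^m\phi^m$ is a combination of shifted Dirac distributions supported on $\Z^d$. Your added remark that all equalities are to be read in $\mathcal{S}'$ (since $M$ does not decay) is a sensible clarification but not a departure from the paper's route.
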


In the present paper, we shall not study the operators $\left(\Delta^\sharp\right)^m$ in more detail.
In particular, we do not verify wether the operator defined by~\eqref{SymbolMyOperator} is
pseudo-differential. However we can state that the operators $\left(\Delta^\sharp\right)^m$ are elliptic
operators.
Note that the action of the operators $\left(\Delta^\sharp\right)^m$ can be explicitly obtained for some types
of functions, for example, algebraic and trigonometric polynomials.
Here note only that the algebraic and trigonometric polynomials
under action of the operator $\left(\Delta^\sharp\right)^m$
are transformed to polynomials of lower degree:
$\left(\Delta^\sharp\right)^m p=\Delta^m p+\mbox{lower order terms}$,
where $p$ is a polynomial and $\Delta$ is the elliptic operator corresponding to the quadratic form $P(\bxi)$.
Moreover,
the elliptic scaling functions can supply the compactly supported wavelets adapted to
operators~\eqref{SymbolMyOperator}, see for example Refs.~\cite{DW,LPLTch,Z_IJWMIP_OperAdaptWavelets}.
This will be the object of another paper.

Finally note that, using definition~\eqref{SymbolMyOperator},
the Fourier transform of the elliptic scaling function
can be presented as
$$
  \hat\phi^m(\bxi)=\left(\dfrac{G(\bxi)}{\widehat{\Delta^\sharp}(\bxi)}\right)^m,
$$
where $G(\bxi)$ is a
trigonometric polynomial
such that its analytic continuation on $\C^d$ cancels {\em all} zeros of the analytic continuation
of the symbol of the operator $\Delta^\sharp$.
Actually the analytic continuation $\widehat{\Delta^\sharp}(\bs z)$,
$\bs z\in\C^d$, has only one zero at the origin.
This will be discussed elsewhere.


\subsection{Univariate case}


The univariate cardinal B-splines correspond
to the presented approach. Indeed, we see that, in the univariate case,
the dilation `matrix' is number $2$ and $q:=2$.
Thus $P(\xi)=\xi^2$, $\xi\in\R$; consequently, by~\eqref{m_0Definition},~\eqref{m_1}, we have
$$
  {G}(\xi):=4\sin^2\dfrac\xi2=2(1-\cos\xi)\qquad\mbox{and}\qquad
    m_0(\xi):=\frac{{G}(\xi+\pi)}{{G}(\pi)}=\frac{1+\cos\xi}{2}.
$$
Using~\eqref{F},~\eqref{HatPhi1},
we get
\begin{align*}
  \mu(\xi)&=\frac{2(1+\cos\xi/2)(1-\cos\xi/2)}{1-\cos\xi}\equiv1,\\
  \hat\phi(\xi)&=\frac{2(1-\cos\xi)}{\xi^2}=\frac{\sin^2\xi/2}{(\xi/2)^2}=:\hat B_{1}(\xi).
\end{align*}
Hence the univariate elliptic scaling function $\phi$ is the cardinal B-spline of degree 1.
Moreover, the univariate scaling function $\phi^m$ is the cardinal
B-spline of {\em odd }degree $2m-1$ and vice versa.

We can say that the univariate cardinal B-splines satisfy the properties of
Theorem~\ref{Theorem_NullSpaceRepresentation}.
In fact, the B-spline $B_{2m-1}(x)$, $x\in\R$, can be considered as the elliptic scaling function
$\phi^m$ and $B_{2m-1}$ represents {\em all} the polynomials of degree $\le 2m-1$. In the univariate case,
all the polynomials of degree $\le 2m-1$ belong to the null-space of the operator $\dfrac{d^{2m}}{dx^{2m}}$,
but there are not polynomials of degree greater than $2m-1$ that belong to $\ker\dfrac{d^{2m}}{dx^{2m}}$.


\section{Examples}\label{Section_Examples}

In this section, we present several bivariate isotropic dilation matrices and
consider some properties of
the corresponding elliptic scaling functions of the first order.

\subsection{$\det A=2$}

In the bivariate case, the dilation matrix $A$ with determinant $2$ can be isotropic if and only if
$0\le\Tr A\le2$, see for example Ref.~\cite{Z_SIAM_DilMatrs}.

\subsubsection{$\Tr A=2$}\label{Subsubsection_QuincunxMatrix}

Consider a quincunx dilation matrix
\begin{equation}\label{Pi/4_DilationMatrix}
  A_1:=
  \begin{pmatrix}
    1 & -1 \\
    1 & 1 \\
  \end{pmatrix}.
\end{equation}
For $A_1$, $\cS:=\left\{\bs 0, (1/2,1/2)\right\}$; and
the matrix $A_1$ is isotropic.
Thus the matrix $A_1^{-T}$
can be presented in the form~\eqref{MatrixBDecomposition}, where $U$
is a rotation matrix by the angle $\pi/4$ and $Q$ is the identity matrix.
Hence the quadratic form $P(\bxi)$ is $|\bxi|^2$, $\bxi\in\R^2$;
consequently the corresponding differential operator is
the Laplace operator.
The trigonometric polynomial $G(\xi_1,\xi_2):=4\left(\sin^2(\xi_1/2)+\sin^2(\xi_2/2)\right)$
and the mask is of the form
\begin{equation}\label{Pi/4_Mask}
  m_{0,1}(\xi_1,\xi_2):=\frac12+\frac14\cos\xi_1+\frac14\cos\xi_2,
\end{equation}
and $\cB$ (given by~\eqref{q}) is equal to 1.

Matrix~\eqref{Pi/4_DilationMatrix} and the corresponding
scaling function, denoted by $\phi_1$, are `very nice' for several reasons.

\begin{itemize}
\item The factorization of the dilation matrix is very simple.
\item The matrix is a quincunx dilation matrix, i.\,e., it defines the quincunx lattice.
\item The quadratic form is $|\bxi|^2$, $\bxi\in\R^2$, which corresponds to the Laplace operator.
\item Since $\cB=1$, it follows that $\left|\hat\phi_1(\bxi)\right|\le C(1+|\bxi|)^{-2}$, $\forall\bxi\in\R^2$.
Consequently $\phi_1$ forms a Riesz basis and belongs to $L^2\left(\R^2\right)$.
\item The elliptic spline\\
$$
  \cF^{-1}\left(\dfrac{G(\bxi)}{P(\bxi)}\right)(\bx)
    =\cF^{-1}\left(\dfrac{4\left(\sin^2(\xi_1/2)
      +\sin^2(\xi_2/2)\right)}{\xi_1^2+\xi_2^2}\right)(\bx)
$$
is the
polyharmonic B-spline $B_1$, see~\eqref{PolyharmonicBSpline}.
\item The elliptic operator with symbol $\widehat{\Delta^\sharp}(\bxi)
=\dfrac{|\bxi|^2}   {M(\bxi)}$ is uniformly elliptic.
\item The scaling function is an interpolating scaling function,
i.\,e., $\phi_1(\bk)=\delta_{\bk\bs 0}$, $\bk\in\Z^2$. (Because
$m_{0,1}(\xi_1,\xi_2)+m_{0,1}(\xi_1+\pi,\xi_2+\pi)\equiv1$.)
\item The scaling function $\phi_1(\bx)\ge0$
for all $\bx = \left(\frac{k_1}{2^{j_1}},\frac{k_2}{2^{j_2}}\right)$, $k_1,k_2\in\Z$, $j_1,j_2\in\Z_{\ge0}$
(because all the coefficients $h_\bk$ in the refinement relation, see~\eqref{ScalingRelation}, are not negative);
and we can suppose that $\phi_1(\bx)\ge0$, $\forall \bx\in\R^2$.
\item The more traditional quincunx dilation matrix
\begin{equation}\label{QuincunxMatrix}
  \te A_1
  :=\begin{pmatrix}
      1 & 1 \\
      1 & -1 \\
    \end{pmatrix}
\end{equation}
gives the same scaling function. (Because mask~\eqref{Pi/4_Mask} is even and
invariant with respect to the change of the variables.)
\end{itemize}

\begin{remark}
Mask~\eqref{Pi/4_Mask}
has been considered in the book~\cite{NPS} in the context of dual
masks construction (for the quincunx dilation matrix case).
\end{remark}

\subsubsection{$\Tr A=1$}

In this subsection, we consider two very close matrices, but the corresponding elliptic
scaling functions have different properties
\begin{equation}\label{DilationMatricesTr1}
      A_2
      :=\begin{pmatrix}
          0 & -2 \\
          1 & 1 \\
        \end{pmatrix},\qquad\qquad
    A_3
      :=\begin{pmatrix}
          1 & -2 \\
          1 & 0 \\
        \end{pmatrix}.
\end{equation}
Since the determinants and traces of dilation matrices~\eqref{DilationMatricesTr1} are equal;
both the matrices are similar to
the same orthogonal (actually, rotation) matrix, see for example Ref.~\cite{Z_SIAM_DilMatrs},
\begin{equation}\label{RotationMatrixTr1}
  U_2=U_3
  :=\begin{pmatrix}
            \frac{1}{2\sqrt2} & -\frac{\sqrt7}{2\sqrt2} \\
            \frac{\sqrt7}{2\sqrt2} & \frac{1}{2\sqrt2} \\
   \end{pmatrix}.
\end{equation}
The squares of the corresponding similarity transformation matrices are
$$
  Q^{2}_2
    :=\begin{pmatrix}
        2 & -\frac12 \\
        -\frac12 & 1 \\
      \end{pmatrix},\qquad\qquad
  Q^{2}_3
    :=\begin{pmatrix}
        2 & \frac12 \\
        \frac12 & 1 \\
      \end{pmatrix},
$$
respectively. Thus the masks are
\begin{align*}
  &m_{0,2}(\xi_1,\xi_2)
       :=\frac14\left(3+2\cos\xi_1-\cos\xi_2+\frac12\sin\xi_1\sin\xi_2\right),\\
  &m_{0,3}(\xi_1,\xi_2):=\frac16\left(3+2\cos\xi_1+\cos\xi_2+\frac12\sin\xi_1\sin\xi_2\right),
\end{align*}
respectively.

Nevertheless, the decays of the Fourier transform of the corresponding elliptic scaling functions
are very different. Indeed, $\cB_2=2$ and $\cB_3=\dfrac{25}{24}$; consequently
$\hat\phi_2(\bxi)=O(1)$ (see Remark~\ref{RemarkOnBruteForceNon-optimality})
and $\left|\hat\phi_3(\bxi)\right|\le C\left(1+|\bxi|\right)^{-1.882}$, $C>0$,
$\forall\bxi\in\R^2$.
Thus the function $\phi_{3}$ forms a Riesz basis and belongs to
$L^2\left(\R^2\right)$; and the function $\phi_{2}$ can be considered very likely only as a distribution.

\begin{remark}
  The angle of rotation $\arccos\dfrac{1}{2\sqrt2}\approx69.2951889^\circ$ performed by rotation
  matrix~\eqref{RotationMatrixTr1} is {\em incommensurable}
to $\pi$, see Ref.~\cite{Z_SIAM_DilMatrs}. So we can suppose that
if the corresponding scaling functions possess some
angular selectivity; then the wavelet transform can detect arbitrary orientated features.
\end{remark}

\subsection{$\det A=4$}

Finally we consider a diagonal dilation matrix without any rotation
(for this matrix, the orthogonal matrix is the identity matrix)
$$
  A_4:=\begin{pmatrix}
    2 & 0 \\
    0 & 2 \\
  \end{pmatrix}.
$$
Nevertheless, for this matrix, it is also possible to obtain the elliptic scaling function.
Indeed, the similarity transformation matrix is the identity matrix, hence
the quadratic form is $|\bxi|^2$, $\bxi\in\R^2$,
and the mask is of the form
\begin{equation*}
   m_{0,4}(\xi_1,\xi_2)=\frac1{16}\left(2+\cos\xi_1+\cos\xi_2\right)\left(2+\cos\xi_1-\cos\xi_2\right)
        \left(2-\cos\xi_1+\cos\xi_2\right).
\end{equation*}
We have $\cB_4:=\dfrac{9}{8}$, consequently the scaling function
$\phi_4$ corresponding to $A_4$ and $m_{0,4}$ satisfies the estimation
$\left|\hat\phi_4(\bxi)\right|\le C(1+|\bxi|)^{-1.8301}$, $C>0$,
$\forall\bxi\in\R^2$. Thus $\phi_4$ belongs to $L^2\left(\R^2\right)$ and forms a Riesz basis.

Note that, in spite of the fact that the matrix $A_4=\te A^2_1$,
where $\te A_1$ is given by~\eqref{QuincunxMatrix}, the mask
$m_{0,4}(\bxi)\not\equiv m_{0,1}(\te A^T_1\bxi)m_{0,1}(\bxi)$,
where $m_{0,1}$ is given by~\eqref{Pi/4_Mask}; and consequently
$\phi_4$ is not the convolution of the scaling functions from
subsection~\ref{Subsubsection_QuincunxMatrix}.

\section*{Conclusion}

In conclusion, we can summarize that the elliptic scaling functions have all the basic properties
of the univariate cardinal B-splines and can be considered as a generalization of the B-splines
to several variables.

\section*{Acknowledgments}
Research was partially supported by RFBR grant No.~12-01-00608-a.

\end{document}